\newcommand{\C}{{\mathbb C}}
\newcommand{\Z}{{\mathbb Z}}
\renewcommand{\P}{{\mathbb P}}
\newcommand{\R}{{\mathbb R}}
\newtheorem{definition}[equation]{Definition}
\newtheorem{corollary}[equation]{Corollary}
\newtheorem{conjecture}[equation]{Conjecture}
\newtheorem{example}[equation]{Example}
\newtheorem{remark}[equation]{Remark}
\newtheorem{lemma}[equation]{Lemma}
\newtheorem{proposition}[equation]{Proposition}
\newtheorem{theorem}[equation]{Theorem}
\newtheorem{question}[equation]{Question}
\newtheorem*{maintheorem}{Main Theorem}
\numberwithin{equation}{section}
\begin{document} 
\title[On $h$-principle and specialness]{%
On $h$-principle and specialness for complex projective manifolds.
}
\author {Frederic Campana \& J\"org Winkelmann}
\begin{abstract} We show that a complex projective manifold $X$ which satisfies the Gromov's $h$-principle is `special' in the sense of \cite{C01} (see definition 2.1 below), and raise some questions about the reverse implication, the extension to the quasi-K\" ahler case, and the relationships of these properties to the `Oka' property. The guiding principle is that the existence of many Stein manifolds which have degenerate Kobayashi pseudometric gives strong obstructions to the complex hyperbolicity of $X's$ satisfying the $h$-principle.

\end{abstract}
%
\maketitle
\section{Introduction}
\begin{definition}\label{dip} (M. Gromov) A complex space $X$ is said to {\em satisfy the $h$-principle} (a property abbreviated by: `hP(X)') if:
for every Stein manifold $S$ and every continuous map
$f:S\to X$ there exists a holomorphic map $F:S\to X$
which is homotopic to  $f$.
\end{definition}

The origin of this notion lies in the works of Grauert and Oka. Grauert indeed showed that any holomorphic principal bundle with fibre a complex Lie Group $G$ over a Stein Manifold $S$ has, for any given continuous section $s$, a holomorphic section homotopic to $s$. The classifications of continuous complex and holomorphic vector bundles on $S$ thus coincide. This was established by Oka for complex line bundles. Considering products $G\times S$, Grauert's result also shows that complex Lie groups satisfy the $h$-principle. This has been extended by M. Gromov to `elliptic' (and later to Forstneric `subelliptic') manifolds.
These include homogeneous complex manifolds 
(for example $\P_n$, Grassmannians and tori)
and complements $\C^n\setminus A$
where $A$ is an algebraic subvariety of codimension at least two.
(Sub-)elliptic manifolds contain as many `entire' curves as possible, and
therefore opposite to Brody hyperbolic complex manifolds.
Since `generic' hyperbolicity is conjectured (and sometimes known) to coincide with``general type''
in algebraic geometry, it is thus natural to assume that 
for projective varieties ``fulfilling
the $h$-principle'' is related to being ``special'' as introduced in
\cite{C01}, since `specialness' is conjectured there to be equivalent to $\Bbb C$-connectedness. In this article we investigate these relationships
with particular emphasis on projective manifolds.

The Main result is:

\begin{maintheorem} 
Let $X$ be a complex projective manifold fulfilling the $h$-principle. Then:
\begin{enumerate}
\item
$X$ is {\em special}.
\item
Every holomorphic map from $X$ to a Brody hyperbolic K\"ahler manifold
is constant.
\end{enumerate}
\end{maintheorem}

For an arbitrary complex manifold we prove the statements below.

\begin{theorem}
Let $X$ be a complex  manifold fulfilling the $h$-principle.

\begin{enumerate}
\item
Then is {\em weakly $\C$-connected} (see definition~\ref{dcc}
below).
\item
For every holomorphic map from $X$ to a complex
semi-torus $T$, the Zariski closure of $f(X)$ in $T$
is the translate of a complex sub semi-torus of $T$.
\item
If $X$ is an algebraic variety, its Quasi-Albanese map is
dominant.
\end{enumerate}

\end{theorem}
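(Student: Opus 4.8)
The plan is to treat part (2) as the heart of the theorem, to deduce (3) from it formally, and to obtain (1) from the same mechanism specialized to hyperbolic targets. For (3), recall that the quasi-Albanese map $\alpha\colon X\to\mathrm{Alb}(X)$ of an algebraic variety lands in a semi-torus and is universal in the sense that the differences $\alpha(x)-\alpha(y)$ generate $\mathrm{Alb}(X)$; thus $\alpha(X)$ lies in no proper translated sub-semi-torus. Granting (2), the closure $\overline{\alpha(X)}^{\,\mathrm{Zar}}$ is a translate $t_0+T'$ of a sub-semi-torus $T'$, so every difference lies in $T'$, forcing $T'=\mathrm{Alb}(X)$ and hence $\alpha$ dominant. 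The engine for the remaining parts is the interaction of hP with Stein manifolds having degenerate Kobayashi pseudometric: given a continuous $\phi\colon S\to X$ from such an $S$, hP supplies a holomorphic $\Phi\simeq\phi$, and for any holomorphic $\psi\colon X\to Y$ the composite $\psi\circ\Phi$ is holomorphic and homotopic to $\psi\circ\phi$. I would use this with $S=\C^{*}$, whose homotopy type is $S^{1}$ and whose Kobayashi pseudometric vanishes: every class in $H_{1}(X;\Z)$ is represented by a loop, extends to a continuous $\phi\colon\C^{*}\to X$, and hP converts it into a \emph{holomorphic} $G\colon\C^{*}\to X$ in the same class.

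For (2), fix a holomorphic $f\colon X\to T$. After a translation I may assume $f(X)$ generates the smallest sub-semi-torus $T'$ containing a translate of its image, so that the goal reduces to proving $\overline{f(X)}^{\,\mathrm{Zar}}=T'$. For each homology class I form $G\colon\C^{*}\to X$ as above and examine the holomorphic curve $f\circ G\colon\C^{*}\to T$. By the classical Bloch--Ochiai--Noguchi--Green theorem, the Zariski closure in a semi-torus of a holomorphic image of $\C^{*}$ (or of $\C$) is a translate of a sub-semi-torus; this sub-semi-torus is contained in $\overline{f(X)}^{\,\mathrm{Zar}}$ and realizes the direction of the winding vector $f_{*}[\gamma]\in\pi_{1}(T)$. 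Assembling these translated sub-semi-tori, whose directions should exhaust $\Lie(T')$ precisely because $f(X)$ generates $T'$, I would conclude that $\overline{f(X)}^{\,\mathrm{Zar}}$ is itself a single translate of $T'$, hence equal to $T'$.

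For (1), I would run the same engine with a Brody hyperbolic target. If $\psi\colon X\to Y$ is holomorphic with $Y$ hyperbolic, then $\psi\circ\Phi\colon\C^{*}\to Y$ is a holomorphic map from a Kobayashi-degenerate source into a hyperbolic manifold and is therefore constant; since $\psi\circ\Phi\simeq\psi\circ\phi$, the induced homomorphism $\psi_{*}\colon\pi_{1}(X)\to\pi_{1}(Y)$ is trivial. The resulting impossibility of detecting any hyperbolic quotient on $\pi_{1}(X)$ is exactly the input required by the definition of weak $\C$-connectedness (Definition~\ref{dcc}), which I would then verify directly against that definition rather than through the chain-of-entire-curves picture (note that hP gives nothing from the contractible source $\C$, so the argument must be routed through non-contractible Stein sources such as $\C^{*}$).

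The step I expect to be the main obstacle is the passage, in (2), from the winding data to the full Zariski closure. The classes $f_{*}[\gamma]$ control only the rigid part of the image, and I must guarantee that the translated sub-semi-tori produced from the $\C^{*}$-curves have directions spanning all of $\Lie(T')$ --- in particular capturing the compact-torus directions, where $\pi_{1}$-information alone can fail to generate --- and that they organize into one translate rather than a proper union of translates. Carrying out the reduction to the generating case cleanly, and invoking the semi-torus structure theorem in its correct logarithmic form, are the delicate points; the remaining verifications are formal.
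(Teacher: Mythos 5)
Your argument for part (1) has a genuine gap: weak $\C$-connectedness (definition~\ref{dcc}(3)) demands that $\pi_k(f)=0$ for \emph{every} $k>0$ and for every unramified cover $X'$ of $X$, whereas your engine, run only through the source $\C^{*}\simeq S^{1}$, kills nothing beyond $\pi_1$. The missing ingredient is precisely the paper's Proposition~\ref{stein-sphere}: for each $k$ the affine quadric $Q_k=\{\sum z_i^2=1\}$ is a Stein manifold homotopy equivalent to $S^k$ whose points are pairwise joined by algebraic $\C^{*}$'s, so that every holomorphic map $Q_k\to Y$ with $Y$ Brody hyperbolic is constant (each $\C^{*}$ maps constantly, since $\C^{*}$ is an image of $\C$). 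Applying hP to a continuous map $Q_k\to X$ representing a class on which $\pi_k(g)$ is nonzero then yields the contradiction for all $k$, as in Theorem~\ref{thpcc}; you would also need Lemma~\ref{et} (hP descends to unramified covers) to handle the quantifier over $X'$. Your own remark that ``hP gives nothing from the contractible source $\C$'' shows you saw the need for non-contractible Stein sources, but $\C^{*}$ alone does not suffice.

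For parts (2)--(3) you invert the paper's logic: you make (2) primary and deduce (3), while the paper proves (3) directly (Theorem~\ref{ndqa}) and the obstacle you flag in (2) is exactly why the direct route is the right one. In the paper's proof of (3) the $\C^{*}$-winding classes \emph{are} guaranteed to span because $\pi_1(X)\to\pi_1(\mathrm{Alb}(X))$ is surjective by functoriality of the (quasi-)Albanese; this is combined with Kawamata's theorem producing finitely many maximal translated sub-semi-tori $S_i$ inside the image closure, Noguchi's logarithmic Bloch--Ochiai theorem forcing each holomorphic $\C\to Z$ (obtained by composing the hP-produced $\C^{*}\to X$ with $\exp$) into some $S_i$, and the counting Lemma~\ref{lemx} that $\Z^n$ is not a finite union of subgroups of smaller rank. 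None of this apparatus appears in your sketch, and for a general holomorphic $f:X\to T$ the surjectivity onto $\pi_1$ fails, so the translated sub-semi-tori you extract need not exhaust the directions of the putative $T'$ --- your ``main obstacle'' is a real one, not a technicality. Your deduction of (3) from (2) via the generating property of the Albanese image is correct as a reduction, but it leaves the burden on the one part you cannot close; you should instead prove (3) directly by the mechanism above and, for algebraic $X$, recover (2) from (3) by factoring $f$ through the quasi-Albanese.
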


Let us now recall resp.~introduce some notation.

\begin{definition}\label{dcc} We say that a complex space $X$ is:

\begin{enumerate}
\item
 $\Bbb C$-connected if any two points of $X$ can be connected by a chain of `entire curves' (i.e.,~holomorphic maps from $\C$ to $X$) . 
This property is preserved by passing to unramified coverings and 
images by holomorphic maps. If $X$ is smooth
this property is also preserved under proper modifications.
\item
 `Brody-hyperbolic' if any holomorphic map $h:\Bbb C\to X$ is constant.
\item
 $X$ is said to be `weakly $\Bbb C$-connected' if every holomorphic map $f:X'\to Y$ from any unramified covering $X'$ of $X$ to a Brody-hyperbolic complex space $Y$ induces maps 
$\pi_k(f):\pi_k(X')\to \pi_k(Y)$ between the
respective homotopy groups which are zero for any $k>0$. 

Observe that any holomorphic map $f:X\to Y$ between complex spaces
is constant if $X$ is $\Bbb C$-connected and $Y$ Brody-hyperbolic. Thus $\Bbb C$-connectedness implies `weak $\Bbb C$-connectedness'. Also, any contractible $X$ is `weakly $\Bbb C$-connected'. 

There exists projective smooth threefolds which are `weakly $\Bbb C$-connected', but not $\Bbb C$-connected. An example can be found in \cite{CW}.
\end{enumerate}
\end{definition}

It is easy to verify that  every `subelliptic' manifold 
$X$ is $\Bbb C$-connected. 
Conversely all known examples of  
connected complex manifolds  satisfying the $h$-principle 
admit a holomorphic homotopy equivalence to a `subelliptic'
complex space.

This suggest the following question:

\begin{question}\label{qhpcc} 
Let $X$ be a complex connected manifold.

If $X$ satisfies the $h$-principle, does this imply that there
exists a holomorphic homotopy equivalence
between $X$ and a 
$\Bbb C$-connected complex space $Z$ ?
\end{question}

Since a compact manifold can not be homotopic to a proper analytic subset
for compact manifolds this question may be reformulated as follows:

\begin{question}\label{qhpccK} 
Let $X$ be a compact complex connected manifold.
If $X$ satisfies the $h$-principle, does this imply that 
$X$ is $\Bbb C$-connected ?
\end{question}

Combining Theorem \ref{hps} with the `Abelianity conjecture' of \cite {C01}, we obtain the following purely topological conjectural obstruction to the $h$-principle:

\begin{conjecture}\label{cab}  
Every projective manifold satisfying the $h$-principle 
should have an almost abelian fundamental group.
\end{conjecture}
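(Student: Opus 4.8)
The plan is to obtain Conjecture \ref{cab} as a formal consequence of two inputs: the specialness statement already established in this paper, and the Abelianity conjecture of \cite{C01}. Concretely, let $X$ be a projective manifold satisfying the $h$-principle. By part (1) of the Main Theorem (Theorem \ref{hps}) the manifold $X$ is special. The Abelianity conjecture of \cite{C01} predicts that the fundamental group of any special projective manifold is almost abelian; granting it, $\pi_1(X)$ is almost abelian, which is exactly the assertion of Conjecture \ref{cab}. Thus the entire content is transported into the single implication ``$X$ special $\Rightarrow \pi_1(X)$ almost abelian'', and the role of the present paper is only to supply its hypothesis.

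To attack that second implication one would separate the linear and the solvable information in $\pi_1(X)$. For the linear part the strategy is to show that every reductive representation $\rho:\pi_1(X)\to GL_n(\C)$ has virtually abelian image. Non-abelian Hodge theory together with the linear Shafarevich theory produces a reduction morphism $sh_\rho:X\to S_\rho$ contracting exactly the subvarieties along which $\rho$ is virtually trivial, and the base $S_\rho$ inherits from the associated period/harmonic map an orbifold structure of general type. Specialness of $X$ forbids any dominant fibration onto a positive-dimensional orbifold base of general type; hence $S_\rho$ is a point and the image of $\rho$ is finite. Running this over all reductive representations controls the reductive linear quotients of $\pi_1(X)$, while for the abelianized information one can feed in the dominance of the Quasi-Albanese map established above, which ties $H_1(X,\Q)$ to an abelian variety and pins down the toral quotient of the image.

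The main obstacle is that this second implication \emph{is} the Abelianity conjecture, which is open in general, so the statement must remain conjectural. Two difficulties resist an unconditional argument. First, the dichotomy ``specialness versus a fibration of general type'' rests on orbifold additivity ($C_{n,m}$-type) statements that are not available in full generality. Second, and more seriously, the non-abelian Hodge approach only constrains \emph{linear} representations, whereas $\pi_1(X)$ may admit no faithful finite-dimensional representation; ruling out large non-linear quotients of the fundamental group of a special variety is precisely the part of the Abelianity conjecture for which no general technique is known. I would therefore expect the honest outcome to be the conditional statement above, with unconditional theorems available only in restricted settings, for instance when $\pi_1(X)$ is already known to be linear, or in low dimension where specialness and its consequences for $\pi_1$ are fully understood.
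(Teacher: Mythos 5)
Your proposal matches the paper exactly: the conjecture is obtained by combining Theorem \ref{hps} (the $h$-principle implies specialness) with the Abelianity conjecture of \cite{C01}, and it remains conjectural precisely because the latter is open. Your added remarks on the linear/reductive case agree with the paper's observation that the statement is unconditional when $\pi_1(X)$ admits a faithful linear representation or is solvable.
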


Our proof of the implication ``$hp(X)\ \Longrightarrow$ {\em special}''
depends on `Joua\-nou\-lou's trick' which is not available for
non-algebraic manifolds.

Still we believe that the statement should also hold in the K\"ahler
case (for which specialness is defined as in definition 2.1 below):
\begin{conjecture}
Every compact K\"ahler manifold satisfying the $h$-principle should
be special.
\end{conjecture}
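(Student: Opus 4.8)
The plan is to run the projective argument on the \emph{base} of the core fibration rather than on $X$ itself, exploiting the fact that a general-type compact K\"ahler orbifold is automatically projective, so that Jouanoulou's trick remains available there even though it fails on $X$. First I would invoke Campana's characterization of specialness in the compact K\"ahler category (definition~2.1): $X$ fails to be special exactly when, after a suitable bimeromorphic modification $\mu:X'\to X$ keeping $X'$ K\"ahler, there is a fibration $f:X'\to Y$ onto a positive-dimensional orbifold base $(Y,\Delta)$ with $K_Y+\Delta$ big. Assuming $\mathrm{hP}(X)$ and that $X$ is not special, I would fix such an $f$ and observe that, since $K_Y+\Delta$ is big, $Y$ carries a big line bundle and is therefore Moishezon; being compact K\"ahler, $Y$ is then projective. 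Consequently Jouanoulou's trick applies to $Y$, producing an affine (hence Stein) manifold $W$ with an affine-bundle projection $p:W\to Y$ that is a homotopy equivalence and whose fibres are copies of $\C^N$.

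Next I would transport this Stein test source from $Y$ up to $X$. The goal is a continuous map $g:W\to X'$ lying over $p$, i.e.\ $f\circ g\simeq p$; here one uses that $W$, being Stein of complex dimension $d$, has the homotopy type of a CW complex of real dimension at most $d$, together with obstruction theory for lifting $p$ through the fibration $f$. Applying $\mathrm{hP}(X)$ (pulled back along $\mu$) to $g$ yields a holomorphic $G:W\to X'$ homotopic to $g$, so that $f\circ G:W\to Y$ is holomorphic and homotopic to the homotopy equivalence $p$, hence topologically essential and in particular of nonzero degree onto $Y$. On the other hand, restricting $f\circ G$ to a fibre $\C^N$ of $p$ gives a holomorphic map $\C^N\to Y$ into an orbifold of general type; by the measure hyperbolicity of general-type targets (Kobayashi--Ochiai, in its orbifold form) such a map is degenerate. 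The aim is then to feed this fibrewise degeneracy into the product structure $W\to Y$ and conclude that $f\circ G$ has rank $<\dim Y$ on a dense open set, contradicting its topological essentiality; this contradiction would show that $\mathrm{hP}(X)$ forces $X$ to be special.

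The hard part will be the step that replaces Jouanoulou on $X$ itself, namely the construction and control of the lift $g:W\to X'$ and the final rank count. Two features of the K\"ahler core obstruct a naive argument: the fibration $f$ generally admits no continuous section, and the base carries a genuine orbifold divisor $\Delta$ encoding multiple fibres, so that the hyperbolicity input must be upgraded to the orbifold setting, where unconditional measure hyperbolicity of general-type orbifolds is only partially established. Moreover, passing from fibrewise degeneracy of $f\circ G$ to genuine non-dominance is delicate: if each $\C^N$-fibre collapsed to a point, $f\circ G$ would merely descend to a self-map of $Y$ homotopic to the identity and yield no contradiction, so one must rule out that $f\circ G$ recovers rank along the base directions. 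It is precisely here that the affine-bundle geometry of $W$ — its degenerate Kobayashi pseudometric in the fibre directions, the ``guiding principle'' of the abstract — must be exploited. I expect that the orbifold hyperbolicity input and this rank-recovery issue, rather than the formal homotopy-theoretic bookkeeping, constitute the real content of the conjecture.
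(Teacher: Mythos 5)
First, a point of calibration: the paper does not prove this statement. It is stated there explicitly as a conjecture, precisely because the authors' proof of ``$hP(X)\Rightarrow$ special'' in the projective case rests on Jouanoulou's trick applied to $\overline{X}$ itself, which is unavailable for non-algebraic $X$; the paper only establishes the partial results that the Albanese map is surjective (Theorem~\ref{hpws}) and that the conjecture holds for compact K\"ahler surfaces. So there is no proof to match yours against, and your idea of moving Jouanoulou's trick from $X$ to the core base $Y=C(X)$ (which is indeed Moishezon, hence projective, being K\"ahler with $K_Y+\Delta$ big) is a genuinely new angle. Unfortunately, as you partly concede, the plan has two gaps, and the second one is fatal to the strategy as formulated.

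Gap one: the continuous lift $g:W\to X'$ with $f\circ g\simeq p$ need not exist. Since $p$ is a homotopy equivalence, producing such a $g$ is equivalent to producing a homotopy section of the fibration $f:X'\to Y$, and core fibrations have no reason to admit one (already for a non-special surface fibred over a curve of genus $\ge 2$); the obstructions live in $H^{k+1}(W;\pi_k(F))$ with $W\simeq Y$ of real dimension $2\dim_{\C}Y$, and the Stein dimension bound on $W$ is $\dim_{\C}Y+N$, so it kills nothing. This is the decisive difference from the paper's projective argument, where the continuous test map $\mathrm{id}_X\circ\tau:M\to X$ exists for free because $\tau$ lands in $\overline{X}$, the same underlying space as $X$ --- no lifting problem ever arises. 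Gap two: even granting $g$ and the holomorphic $G$, there is no contradiction at the end. If $f\circ G$ collapses the $\C^N$-fibres (which is exactly what orbifold Kobayashi--Ochiai predicts), it descends to a holomorphic self-map $\psi:Y\to Y$ homotopic to $\mathrm{id}_Y$, and $\psi=\mathrm{id}_Y$ is a perfectly consistent outcome; fibrewise degeneracy cannot be promoted to global rank $<\dim Y$. The engine of the paper's projective proof is precisely the ingredient your plan omits: the conjugate complex structure. There the holomorphized map is homotopic to the \emph{antiholomorphic} $\mathrm{id}_X:\overline{X}\to X$, so the induced $\varphi=c\circ\zeta:\overline{X}\dasharrow C$ is simultaneously forced to be meromorphic non-degenerate (via Theorems~\ref{koo} and~\ref{ftcm}) and forbidden from being so by the sign flip $I'=(-1)^dI$ of Lemma~\ref{int} and Corollary~\ref{cint}. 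Without some analogue of that orientation-reversal mechanism your setup produces no tension at all --- indeed, as written it would not even recover Theorem~\ref{hps} in the projective case. The conjecture should still be regarded as open.
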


This implication might also hold for quasi-projective manifolds, 
provided their topology is sufficiently rich (non-contractibility being obviously a minimal requirement). Particular cases involving the quasi-Albanese map (dominance and connectedness) are established, using  \cite{NWY}. See theorems \ref{ndqa} and \ref{niqa} in \S \ref{QAm}.

The converse direction (``does specialness imply the $h$-principle?'')
is almost completely open. Based on classification and known results, 
the implication does hold for curves 
as well as surfaces which are either rational, 
or ruled over an elliptic curve, or blown-ups of either Abelian or bielliptic surfaces. 
The question remains open for all other special surfaces, and thus in particular for K3, even Kummer, surfaces. In higher dimensions even less is known,
e.g.~the case of $\P^3$ blown-up in a smooth curve of degree $3$ or more
is far from being understood.

Still, with a sufficient amount of optimism one might hope for
a positive answer to the question below.

\begin{question}
Let $X$ be a smooth (or at least normal) quasi-projective variety.
Assume that $X$ is either `special', or $\C$-connected.

Does it follow that $X$ satisfies the $h$-principle ? In this case, is it Oka (see \S\ref{EO})?
\end{question}

We present some examples showing that there is no positive answer
for arbitrary (ie: non-normal, non-K\" ahler, or non-algebraic varieties):
There are
examples of the following types which do {\em not} fulfill
the $h$-principle despite being $\C$-connected, or staisfying definition 2.1 (recall that we reserve the term `special' for the K\" ahler or quasi-K\" ahler case only):

\begin{enumerate}
\item
A non-normal projective curve which satisfies definition 2.1 and is $\C$-connected.
\item
A non-compact and non-algebraic complex manifold which is
$\C$-connected.
\item
A compact non-K\"ahler surface which satisfies definition 2.1.
\end{enumerate}

See \S \ref{EO}  for  more details.

\begin{remark} 
\begin{enumerate}
\item
Any contractible complex space trivially satisfies the $h$-principle.  
The notion ``$h$-principle'' is thus of interest only 
for non-contractible $X's$.
Since positive-dimensional compact manifolds are never contractible
this is not relevant for projective manifolds. However, 
there do exist examples of
contractible affine varieties of log general type (\cite{R},\cite{M})
indicating that
for non-compact varieties an equivalence ``hP $\iff$ {\em special}''
can hold only if the topology of the variety is sufficiently non-trivial.

\item
 Let $u:X'\to X$ be an unramified covering, with $X$ and $X'$ smooth 
and connected. Then $hP(X)$ implies $hP(X')$ (see Lemma \ref{et}), but the converse is not true.
To see this, consider a 
compact Brody-hyperbolic manifold $X$ which  
is an Eilenberg-MacLane $K(\pi,1)$-space,
but not contractible
(for example: a projective curve of genus $g\geq 2$
or a compact ball quotient). 
Then  its universal cover $\tilde X$ is contractible and therefore
satisfies the $h$-principle. On the other hand, being Brody-hyperbolic
and non-contractible, $X$ itself can not satisfy the $h$-principle.

\item For any given $X$ and $f$, possibly replacing the initial complex structure $J_0$ of $S$ by another one $J_1=J_1(f)$, homotopic to $J_0$, the existence of $F$ as in definition \ref{dip} above is always true (if $dim_{\Bbb C}(S)\geq 3$ at least. If $dim_{\Bbb C}(S)=2$, one must first compose with an orientation preserving homeomorphism of $S$). See \cite{F}, \S 9.10).
\end{enumerate}
\end{remark}

We thank Finnur L\'arusson for useful comments on an initial version of the present text.

\section{`Specialness'}\label{Sp}

\subsection{`Specialness' and the `core map'}

\

\

We refer to \cite{C01} for more details on this notion, to which the present section is an extremely sketchy introduction. Roughly speaking, special manifolds are higher-dimensional generalisations of rational and elliptic curves, thus `opposite' to manifolds of `general type' in the sense that they, and their finite \'etale covers, do not admit non-degenerate meromorphic maps to `orbifolds' of general type. Many qualitative properties of rational or elliptic curves extend or are expected to extend to `special' manifolds, although they are much more general (see remark \ref{rspec}.(7) below).

Let $X$ be a connected compact K\" ahler manifold.

\begin{definition} Let $p>0$, and $L\subset \Omega_X^p$ be a saturated rank $1$ coherent subsheaf. We define:

$$\kappa^{sat}(X,L):=\limsup_{m>0} 
\left\{\frac{log(h^0(X,\overline{mL}))}{log(m)}
\right\},$$ 
where $H^0(X,\overline{mL})\subset H^0(X,(\Omega_X^p)^{\otimes m})$ is the subspace of sections taking values in $L_x^{\otimes m}\subset (\Omega_X^p)_x^{\otimes m}$ at the generic point $x$ of $X$.

By a generalisation of Castelnuovo-De Franchis due to F. Bogomolov, $\kappa^{sat}(X,L)\leq p$, with equality if and only if $L=f^*(K_Y)$ at the generic point of $X$, for some meromorphic dominant map $f:X\dasharrow Y$, with $Y$ a compact $p$-dimensional manifold.

We say that $L$ is a `Bogomolov sheaf' on $X$ if $\kappa^{sat}(X,L)=p>0$, and that $X$ is `special' if it has no Bogomolov sheaf.
\end{definition}

\begin{remark}\label{rspec} 1. A `special' manifold is `very weakly special' (ie: has no dominant meromorphic map $f:X\dasharrow Y$ onto a positive-dimensional manifold $Y$ of `general type'), since $L:=f^*(K_Y)^{sat}$ would provide a Bogomolov sheaf on $X$. In particular, $X$ is not of general type (ie: $\kappa(X):=\kappa(X,K_X)<dim(X))$.

2. `Specialness' is a bimeromorphic property. If $X$ is special, so is any $Y$ `dominated' by $X$ (ie: such that a dominant rational map $f:X\dasharrow Y$ exists).

3. If $X$ is special, and if $f: X'\to X$ is unramified finite, then $X'$ is special, too. The proof (see \cite{C01}) is surprisingly difficult. It shows that `specialness' implies `weak specialness', defined as follows: $X$ is weaky special if any of its unramified covers is `very weakly special', as defined in (1) above.

4. The notion of `weak specialness' looks natural, and is easy to define. Unfortunately, it does not lead to any meaningfull structure result, such as the one given by the core map, stated below. On the other hand, it is also too weak to characterise the vanishing of the Kobayashi pseudometric (see (10) below).

5. Geometrically speaking, a manifold $X$ is `special' if and only if it has no dominant rational map onto an `orbifold pair' $(Y,\Delta)$ of general type. We do not define these concepts here. See \cite{C01} and \cite{C11} for details.

6. Compact k\" ahler manifolds which are either rationally connected, or with $\kappa=0$ are special (see \cite{C01}).

7. For any $n>0$ and any $\kappa\in \{-\infty, 0, 1,\dots, (n-1)\}$, there exists special manifolds with $dim(X)=n$ and $\kappa(X)=\kappa$. See, more precisely, \cite{C01}, \S 6.5.

8. For curves, `special' is equivalent to `very weakly special', and also to: non-hyperbolic. For surfaces, `special' is equivalent to `weakly special', and also to: $\kappa<2$, jointly with $\pi_1(X)$ almost abelian. Thus `special' surfaces are exactly the ones with either:

a. $\kappa=-\infty$ and $q\leq 1$, or:

b. $\kappa=0$, or:

c. $\kappa=1$, and $q(X')\leq 1$ for any finite \'etale cover $X'$ of $X$.

9. Another quite different characterisation of compact K\"ahler special surfaces $X$ is: $X$ is special if and only if it is $\Bbb C^2$-dominable. (with the possible exception of non-elliptic K3 surfaces, which are special, but not known to be $\Bbb C^2$-dominable). One direction is essentially due to \cite{BL}.

10. When $n:=dim(X)\geq 3$, there exists $X$ which are `special', but not `weakly special' (see \cite{BT}), and no simple characterisation of specialness 
depending only on $\kappa$ and $\pi_1$ does exist. 
Moreover, there are examples of weakly special varieties for which
the Kobayashi pseudometric does not vanish identically
(see \cite{CP}, \cite{CW}).
\end{remark}

The central results concerning `specialness' and having motivated its introduction
are the following two structure theorems 
(see \cite{C01} and \cite{C11} for definitions and details):

\begin{theorem} For any compact K\"ahler manifold $X$, there exists a unique almost holomorphic meromorphic map with connected fibres $c_X:X\dasharrow C(X)$ such that:

1. Its general fibre is special, and:

2. Its orbifold base $(C(X), \Delta_{c_X})$ is of general type (and a point exactly when $X$ is special).

The map $c_X$ is called the `core map' of $X$. It functorially `splits' $X$ into its parts of `opposite' geometries (special vs general type).
\end{theorem}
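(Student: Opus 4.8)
The plan is to construct $c_X$ as the maximal element of the family of fibrations of $X$ whose orbifold base is of general type, and then to verify that the general fibre of this maximal fibration is special. First I would fix terminology: by a \emph{fibration} I mean an almost holomorphic dominant meromorphic map $f:X\dasharrow Y$ with connected fibres and $Y$ normal, and to each such $f$ I attach its \emph{orbifold base} $(Y,\Delta_f)$, where the boundary divisor $\Delta_f=\sum_D (1-1/m_D)\,D$ records, component by component, the generic multiplicity $m_D$ of a smooth holomorphic model of $f$ along each prime divisor $D\subset Y$ (cf. Remark~\ref{rspec}(5)). Call $f$ a \emph{general-type fibration} if $(Y,\Delta_f)$ is of general type, i.e. $K_Y+\Delta_f$ is big. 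The constant map to a point is declared special rather than of general type, so that for $X$ special the theorem produces $C(X)=\mathrm{pt}$.

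Next I would introduce the partial order in which $g$ dominates $f$ when $g$ factors bimeromorphically through $f$ (so that $g$ has the larger base and the smaller fibres), and prove the key \textbf{combination lemma}: any two general-type fibrations $f_1,f_2$ are dominated by a common general-type fibration. The natural candidate is obtained from the image of $(f_1,f_2):X\dasharrow Y_1\times Y_2$, taking its Stein factorization $Y_{12}$; this surjects onto both $Y_1$ and $Y_2$, so it dominates $f_1$ and $f_2$, and the substance of the lemma is that $(Y_{12},\Delta_{g})$ inherits general type from those of $f_1$ and $f_2$. Granting this, the base dimension is bounded by $\dim X$, so a Noetherian maximality argument produces a maximal general-type fibration, which I define to be $c_X:X\dasharrow C(X)$; uniqueness is then immediate, since two maximal elements would each dominate the other. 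The engine behind the combination lemma is an additivity statement for the orbifold Kodaira dimension — an orbifold version of Viehweg's weak positivity and of the Iitaka $C_{n,m}$ inequality for pairs $(Y,\Delta_f)$ — and this is precisely the step I expect to be the main obstacle, and the point at which the compact Kähler hypothesis and the machinery of \cite{C01} are genuinely used.

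It then remains to show that the general fibre $F$ of $c_X$ is special. I would argue by contradiction: if $F$ carried a Bogomolov sheaf, equivalently a fibration $F\dasharrow Z$ with $(Z,\Delta)$ of general type, I would spread this family out over a neighbourhood of the generic point of $C(X)$ to obtain a fibration $X\dasharrow W$ refining $c_X$, and then invoke the orbifold additivity once more, now in relative form, to combine the general type of the base $(C(X),\Delta_{c_X})$ with the general type detected along the fibres and conclude that $(W,\Delta_{W})$ is again of general type. This fibration strictly dominates $c_X$, contradicting maximality; hence $F$ is special. Conversely $(C(X),\Delta_{c_X})$ is of general type by construction, and reduces to a point exactly when no nonconstant general-type fibration exists, i.e. exactly when $X$ itself is special.

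Finally, functoriality is formal once the above is in place: a dominant meromorphic map $X\dasharrow X'$ pulls general-type fibrations of $X'$ back to general-type fibrations of $X$, so it is automatically compatible with the maximal such fibrations on each side, which exhibits $c_X$ as the canonical splitting of $X$ into its special fibres and its general-type base. Thus the entire argument hinges on the orbifold $C_{n,m}$-type additivity flagged above; everything else — the partial order, the maximality argument, uniqueness, and functoriality — is formal once that additivity and the resulting combination lemma have been established.
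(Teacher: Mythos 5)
First, a structural remark: the paper you are working from does not prove this theorem at all --- it is quoted as a structure theorem from \cite{C01}, so there is no in-paper argument to compare against. Your strategy --- order the general-type fibrations of $X$ by domination, prove a combination lemma so that a maximal one exists, call that maximal fibration $c_X$, and then show its general fibre is special by spreading out a putative general-type fibration of the fibre and contradicting maximality --- is essentially the architecture of the construction in \cite{C01} (where the core is also described dually via maximal special subvarieties through the general point, the quotient being produced by Chow--Barlet space techniques; the two descriptions coincide, and yours is the one closest to the statement as given). Uniqueness and functoriality are indeed formal once the combination lemma is in place.

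The genuine gap is where you locate the engine. You say the combination lemma and the fibre-speciality step rest on ``an orbifold version of \dots the Iitaka $C_{n,m}$ inequality'' and flag this as the main obstacle. But the orbifold $C_{n,m}$ statement is, in full generality, an open conjecture: the paper records it as such, and the theorem stated immediately after the core theorem (the decomposition $c_X=(J\circ r)^n$) is explicitly conditional on it, whereas the core theorem itself is unconditional. As written, your argument therefore proves only a conditional statement. What makes the actual proof unconditional is that both of your appeals to additivity occur over an orbifold base that is \emph{already of general type}: in the combination lemma the fibration $Y_{12}\dasharrow Y_1$ has general-type base $(Y_1,\Delta_1)$, and in the fibre-speciality step the refinement $W\dasharrow C(X)$ has general-type base $(C(X),\Delta_{c_X})$. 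The additivity statement $C_{n,m}^{+}$ over a base of general type is a theorem (Viehweg-type weak positivity of direct images, extended to the orbifold category in \cite{C01}), not a conjecture, and it is exactly this special case --- and only this one --- that the proof needs. You must make that distinction explicit, otherwise the existence of the core would be no better established than the conditional decomposition theorem. A secondary point to tighten: ``spreading out'' a general-type fibration from the general fibre $F$ to a fibration $X\dasharrow W$ requires a countability argument (very general versus general fibres, via relative Chow or Barlet spaces), which in the compact K\"ahler, non-projective setting is not automatic; \cite{C01} sidesteps it by working with Bogomolov sheaves, which are defined globally on $X$ and then restricted to the fibres.
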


\begin{conjecture}
For any $X$ as above, $c_X=(J\circ r)^n$, where $n:=dim(X)$. Here $J$ (resp. $r$) are orbifold versions of the Moishezon fibration and of the `rational quotient' respectively. In particular, special manifolds are then towers of fibrations with general fibres having either $\kappa=0$, or $\kappa_+=-\infty$. 
\end{conjecture}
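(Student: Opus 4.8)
The plan is to realise the core map $c_X$ as the stabilisation of the iterated composition $J\circ r$, and then to verify, directly from the construction, the two properties that characterise the core according to the structure theorem above: its general fibre is special and its orbifold base is of general type. The uniqueness asserted in that theorem then forces the identification $c_X=(J\circ r)^n$.

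First I would set up the two elementary operations in the geometric orbifold category (as developed in \cite{C11}). For a geometric orbifold $(X,\Delta)$, the orbifold rational quotient $r$ is the almost holomorphic fibration whose general fibres are the maximal rationally connected orbifold subvarieties --- equivalently, those with $\kappa_+=-\infty$ --- and whose orbifold base carries no rational orbifold curves; this generalises the Graber--Harris--Starr and Campana construction to the orbifold setting. The orbifold Iitaka--Moishezon fibration $J$ is then applied to this base: it is the fibration whose general fibres have orbifold Kodaira dimension $0$ and whose orbifold base has maximal orbifold Kodaira dimension. The composite $J\circ r$ thus peels off, in one stroke, a rationally connected part (via $r$) and a $\kappa=0$ part (via $J$ on the resulting non-uniruled base).

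Next I would run the iteration. Applying $r$ and $J$ again to the orbifold base of $J\circ r$, and continuing, one obtains a tower in which each step either strictly lowers the dimension of the current base or is already an isomorphism; the orbifold structure $\Delta$ accumulated at one level can create new rationally connected or $\kappa=0$ orbifold fibrations invisible at the previous level, which is exactly why a single step need not suffice. Since $\dim X=n$, the procedure stabilises after at most $n$ steps, yielding a well-defined map $(J\circ r)^n:X\dashrightarrow B$. By construction the general fibre $F$ of this map is a fixed point of $J\circ r$, so $F$ is built as a tower of rationally connected ($\kappa_+=-\infty$) and $\kappa=0$ orbifold fibrations admitting no further quotient of either type; one then shows that such an $F$ carries no Bogomolov sheaf, i.e.~$F$ is special. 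Dually, the base $(B,\Delta_B)$ has been stripped of every rationally connected and every $\kappa=0$ orbifold fibration, and I would argue that this forces it to be of general type.

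The heart of both verifications --- and the main obstacle --- is orbifold additivity of the Kodaira dimension, i.e.~the orbifold $C_{n,m}$-type inequality $\kappa(X)\ge \kappa(F)+\kappa(B,\Delta_B)$ for an orbifold fibration with general fibre $F$ and orbifold base $(B,\Delta_B)$. It is this additivity that both guarantees the general-type orbifold base at each $J$ step (so that $J$ is even well defined in the required sense) and prevents a Bogomolov sheaf from surviving a tower of $\kappa=0$ and rationally connected pieces, thereby yielding the specialness of the fibre. One further needs the orbifold characterisation ``$\kappa_+=-\infty\iff$ rationally connected'' and the orbifold abundance statement that a non-uniruled orbifold has $\kappa\ge 0$. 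All of these remain open in full generality, so the argument is necessarily conditional; in effect, proving the conjecture reduces to establishing these foundational orbifold results.
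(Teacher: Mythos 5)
The statement you were asked to prove is stated in the paper as a \emph{conjecture}, and the paper gives no proof of it: immediately afterwards the authors record only the conditional result that ``the preceding conjecture holds, if the orbifold version of Iitaka's $C_{n,m}$-conjecture is true,'' deferring the actual construction to \cite{C11}. Your proposal is therefore correctly calibrated: you reproduce the intended strategy (iterate the orbifold rational quotient and orbifold Moishezon/Iitaka fibration, observe stabilisation after at most $n=\dim X$ steps for dimension reasons, and invoke the uniqueness in the core-map structure theorem once the fibres are shown special and the base of general type), and you correctly identify orbifold additivity $\kappa(X)\ge\kappa(F)+\kappa(B,\Delta_B)$ as the single point on which everything hinges --- which is exactly the hypothesis of the paper's conditional theorem. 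Your honest conclusion that the argument is necessarily conditional is the right one; no unconditional proof exists, in this paper or elsewhere. One small remark: you also list the orbifold equivalence ``$\kappa_+=-\infty\iff$ rationally connected'' and orbifold abundance among the needed inputs, but the conjecture as phrased describes the fibres via $\kappa_+=-\infty$ rather than via rational connectedness precisely so as to sidestep the uniruledness conjecture, so that particular input can be avoided; the essential open ingredient really is orbifold $C_{n,m}$ alone.
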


\begin{theorem}
The preceding conjecture holds, if the orbifold version of
Iitaka's $C_{n,m}$-conjecture is true.
\end{theorem}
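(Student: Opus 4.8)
The plan is to verify that the fibration $(J\circ r)^n$ enjoys the two properties which, by the structure theorem for the core map, characterise $c_X$ uniquely: its general fibre is special, and its orbifold base is of general type. First I would make sense of the iterate. Starting from $X$ one applies the orbifold rational quotient $r$, whose general fibre is rationally connected, and then the orbifold Moishezon--Iitaka fibration $J$ to the resulting orbifold base, whose general fibre has Kodaira dimension $0$. Each nontrivial application of $J\circ r$ strictly lowers the dimension of the orbifold base, so after at most $n=\dim(X)$ steps the procedure reaches a fixed point; this makes $(J\circ r)^n$ well defined and equal to the stabilised fibration, which I denote $c'$, with general fibre $F$ and orbifold base $(Z,\Delta)$.

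Next I would analyse $F$. By construction it is an iterated tower of fibrations whose successive general fibres are either rationally connected or of Kodaira dimension $0$; both types are special (see Remark~\ref{rspec}). To conclude that $F$ itself is special I must rule out a Bogomolov sheaf on $F$, equivalently a dominant meromorphic map from $F$ onto a positive-dimensional orbifold of general type. Here the orbifold $C_{n,m}$-conjecture enters decisively: its additivity of orbifold Kodaira dimensions bounds the Kodaira dimension of any such image by the contributions of the fibres of the tower, all of which are $0$ or $-\infty$, so that no general-type quotient can exist and $F$ is special.

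I would then identify the orbifold base $(Z,\Delta)$ as being of general type (or a point). As a fixed point of $J\circ r$ it satisfies $r(Z)=Z$, so $Z$ is not uniruled, and $J(Z)=Z$, so the orbifold Iitaka fibration is trivial and $\kappa(Z,\Delta)=\dim(Z)$, that is $(Z,\Delta)$ is of general type. Orbifold $C_{n,m}$ is used again here to guarantee that the Moishezon--Iitaka fibration $J$ really has an orbifold base of general type at each stage, so that the stabilised base coincides with the general-type base of the core rather than overshooting or undershooting it.

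Having produced a fibration $c'$ with special general fibre and orbifold base of general type, the uniqueness clause of the structure theorem forces $c'=c_X$, that is $c_X=(J\circ r)^n$; the description of $F$ as a tower of rationally connected and $\kappa=0$ fibrations then yields the final assertion of the conjecture. The main obstacle is wholly concentrated in the two appeals to orbifold $C_{n,m}$ above: without orbifold additivity of Kodaira dimensions one can certify neither that the total fibre $F$ carries no Bogomolov sheaf nor that the stabilised base is of general type rather than merely non-uniruled, and the identification with the core map would then break down.
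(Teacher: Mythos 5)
First, a remark on the comparison itself: the paper offers no proof of this theorem --- it is quoted from \cite{C01} and \cite{C11}, where the argument occupies a substantial portion of those papers --- so your outline can only be measured against that argument. In broad strokes you reproduce it correctly: iterate $J\circ r$, observe that the dimension of the base strictly drops until the process stabilises after at most $n$ steps, use $C_{n,m}^{orb}$ to show that a tower of fibrations with special orbifold fibres has special total fibre (no Bogomolov sheaf survives the additivity estimate), and invoke the uniqueness of the core as the fibration with special general fibres and general type orbifold base. One bookkeeping point you should make explicit is that the orbifold divisor $\Delta$ accumulated on the stabilised base $Z$ must coincide (birationally) with the orbifold base divisor of the composed fibration $X\dasharrow Z$, since that composed orbifold base is the object to which the uniqueness clause of the structure theorem applies.

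The genuine gap is in your identification of $(Z,\Delta)$ as being of general type. From $r(Z)=Z$ you get that $(Z,\Delta)$ is not uniruled, and from $J(Z)=Z$ you conclude $\kappa(Z,\Delta)=\dim Z$. But the second implication presupposes $\kappa(Z,\Delta)\ge 0$: if $\kappa(Z,\Delta)=-\infty$ the Iitaka fibration degenerates to the constant map, the iteration terminates prematurely at a point, and the argument would wrongly declare $X$ special. The implication ``non-uniruled $\Rightarrow\kappa\ge 0$'' is a separate open conjecture (orbifold non-vanishing/uniruledness) which is \emph{not} a consequence of $C_{n,m}^{orb}$, so as written the proof does not close. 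The way \cite{C01}, \S 6, and \cite{C11} avoid this is to take $r$ to be not the literal rational quotient but the $\kappa_+$-quotient (the ``$\kappa$-rational quotient''): the fibration whose general orbifold fibres satisfy $\kappa_+=-\infty$ and whose orbifold base satisfies $\kappa\ge 0$, the existence and uniqueness of which is itself one of the consequences of $C_{n,m}^{orb}$. The paper's own formulation of the conjecture, describing the fibres of the tower as having ``$\kappa_+=-\infty$'' rather than as rationally connected, points at exactly this definition; with that reading of $r$ the stabilised base has $\kappa\ge 0$ by construction and your deduction of general type from $J=\mathrm{id}$ becomes valid.
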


\begin{remark} The above two theorems extend naturally to the full orbifold category. 
\end{remark}

The last (conditional) decomposition naturally leads (see \cite{C11}) to the following conjectures:

\begin{conjecture}\label{cj} 1. If $X$ is special, $\pi_1(X)$ is almost abelian.

2. $X$ is special if and only if its Kobayashi pseudometric vanishes identically.

3. $X$ is special if and only if any two of its points can be connected by an entire curve (ie: the image of a holomorphic map from $\Bbb C$ to $X$).

\end{conjecture}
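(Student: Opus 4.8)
The plan is to deduce all three statements from the conjectural structure theory recalled above. I would prove part (3) first, since $\C$-connectedness is the most geometric of the three and immediately yields half of part (2) (the vanishing of the Kobayashi pseudometric), and leave the group-theoretic part (1) for last. Granting the conditional decomposition stated above --- so that a special $X$, up to finite \'etale cover and bimeromorphic modification, is a tower of fibrations whose general fibres are either rationally connected or have $\kappa = 0$ --- the strategy is the standard two-step one: first establish each property for the two kinds of building blocks, then propagate it up the tower through each individual orbifold fibration.

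For part (3) the building blocks are handled as follows. A rationally connected manifold is connected by chains of rational curves, each of which is the image of an entire curve, so it is $\C$-connected. For a fibre $F$ with $\kappa(F) = 0$ one needs the statement (known in many cases, e.g.\ tori, bielliptic surfaces, most K3s) that such $F$ carry enough entire curves to be $\C$-connected. The propagation step is the crux: given a fibration $g : X \to B$ with $\C$-connected general fibre and $\C$-connected orbifold base, one must lift a connecting chain in $B$ to a chain in $X$ and then move within fibres, taking care of the multiple and singular fibres encoded in the orbifold divisor $\Delta$. Once $\C$-connectedness is in hand, the easy half of (2) follows at once: entire curves have identically zero Kobayashi pseudometric, so a chain of them forces $d_X \equiv 0$.

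The reverse implications --- that a non-special $X$ has non-degenerate Kobayashi pseudometric (part 2) and is not $\C$-connected (part 3) --- are of a different nature. Here one uses that non-specialness produces a dominant meromorphic map onto an orbifold $(Y,\Delta)$ of general type, and one wants to invoke an orbifold version of the Green--Griffiths--Lang philosophy: orbifolds of general type should be (measure-)hyperbolic, so entire curves cannot dominate them and the pseudometric cannot vanish in directions transverse to the core map. A Bogomolov sheaf $L \subset \Omega_X^p$ with $\kappa^{sat}(X,L) = p$ already yields pluri-differentials obstructing the Zariski density of entire curves, which is the natural starting point. For part (1) the building-block input is the almost-abelianity of $\pi_1$ for $\kappa = 0$ manifolds (Bogomolov--Beauville--Yau type decompositions), while rationally connected pieces are simply connected; propagation uses the homotopy exact sequence $\pi_1(F) \to \pi_1(X) \to \pi_1^{orb}(B) \to 1$ of an orbifold fibration.

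The main obstacle is twofold. Structurally, everything rests on the conditional decomposition, hence ultimately on the orbifold $C_{n,m}$ conjecture, so the whole scheme is at best a conditional reduction. Even granting it, the two genuinely hard inputs are: (a) the orbifold hyperbolicity of general-type bases, which is precisely the deepest form of Lang's conjecture and is required for every ``only if'' direction of (2) and (3); and (b) for part (1), the failure of ``almost abelian'' to be stable under group extensions, so that the homotopy exact sequence alone does not close the induction --- one must exploit the special geometry (for instance, the severe constraints on the monodromy action on $\pi_1(F)$) to control the extension, which is exactly where the Abelianity conjecture concentrates its difficulty.
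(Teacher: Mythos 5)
This statement is Conjecture~\ref{cj} in the paper: the authors do not prove it, they state it as a conjecture which is ``naturally led to'' by the conditional decomposition $c_X=(J\circ r)^n$, and they explicitly tie part (1) to the Abelianity conjecture of \cite{C01} and \cite{C11}. So there is no proof in the paper to compare yours against, and your proposal should not be presented as a proof either: it is a reduction of the conjecture to a list of other statements, essentially all of which are themselves open. Concretely: (a) the decomposition into towers of fibrations with fibres of $\kappa=0$ or $\kappa_+=-\infty$ is conditional on the orbifold $C_{n,m}$ conjecture, as the paper states; (b) even granting it, the $\C$-connectedness (or vanishing of the Kobayashi pseudometric) of $\kappa=0$ manifolds is open --- the paper itself points out in Remark~\ref{rspec}(9) that non-elliptic K3 surfaces are special but not known to be $\C^2$-dominable, so your ``building block'' step already fails to be available there; (c) the propagation step through an orbifold fibration (lifting chains of entire curves across multiple fibres encoded in $\Delta$) is not established in any generality; (d) the ``only if'' directions of (2) and (3) require that positive-dimensional general-type orbifolds admit no Zariski-dense entire curves and have non-degenerate pseudometric, which is an orbifold form of the Green--Griffiths--Lang conjecture --- the only tool the paper has in this direction is the Kobayashi--Ochiai extension theorem (Theorem~\ref{koo}), which constrains non-degenerate meromorphic maps from $\C^n$, not single entire curves; and (e) for part (1), as you yourself note, almost-abelianity is not stable under extensions, so the homotopy exact sequence of the tower does not close the induction.

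To your credit, you correctly identify where the difficulty concentrates, and your heuristic matches the paper's own motivation for stating the conjecture. But the honest conclusion is that what you have written is an explanation of \emph{why the conjecture is plausible}, not a proof; each of the steps (b)--(e) above is a genuine gap that cannot be filled with currently known results. If you want to extract something provable from this circle of ideas, the paper shows what is actually within reach: the \emph{consequences} of the $h$-principle (Theorems~\ref{thpcc}, \ref{tphpwcc}, \ref{hps}) are proved unconditionally by combining Jouanoulou's trick, the opposite-complex-structure integral argument of Lemma~\ref{int}, and Theorem~\ref{koo}, precisely because those arguments avoid the open hyperbolicity and abelianity inputs that your proposal requires.
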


\subsection{Orbifold Kobayashi-Ochiai and Factorisation through the core map}\label{ss-core}

The following orbifold version of Kobayashi-Ochiai extension theorem will be crucial in the proof of our main result.

\begin{theorem}\label{koo} (\cite{C01}, Theorem 8.2) Let $X$ be a compact K\"ahler manifold, $c_X: X\dasharrow C(X)$ be its core map\footnote{Or, more generally, any map $f:X\to Y$ of general type in the sense of \cite{C01}.}, $M\subset \overline{M}$ be a non-empty Zariski open subset of the connected complex manifold $\overline{M}$, and $\varphi:M\to X$ be a meromorphic map such that $g:=c_X\circ \varphi: M\to C(X)$ is non-degenerate (ie: submersive at some point of $M$). Then $g$ extends meromorphically to $\overline{M}$.
\end{theorem}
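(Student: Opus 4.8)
The plan is to reduce the statement to the classical Kobayashi--Ochiai extension theorem, executed in the orbifold category attached to the core map. The content of the hypothesis is that the orbifold base $(C(X),\Delta_{c_X})$ is of general type, i.e.\ that the orbifold canonical bundle $K_{C(X)}+\Delta_{c_X}$ is big. The feature I would exploit throughout is that the fractional divisor $\Delta_{c_X}$ is built, component by component, from the multiplicities of the fibres of $c_X$, precisely so that \emph{orbifold} pluricanonical forms on $(C(X),\Delta_{c_X})$ become the correct substitute for ordinary pluricanonical forms; this is what upgrades ``$C(X)$ of general type'' (which may fail) to ``$(C(X),\Delta_{c_X})$ of general type'' (which holds by construction of $c_X$).

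First I would localise. It suffices to extend $g$ meromorphically across a generic smooth point of each irreducible component of the boundary $D:=\overline{M}\setminus M$: extension across the residual locus of codimension $\geq 2$ is then automatic, by normality of $\overline{M}$ together with a Remmert--Stein / Thullen type argument for meromorphic maps into the compact K\"ahler manifold $C(X)$. Thus I may assume $\overline{M}=\Delta^n$, $D=\{z_1=0\}$, $M=\Delta^*\times\Delta^{n-1}$, and the task is to extend $g$ across $\{z_1=0\}$.

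Next, from bigness of $K_{C(X)}+\Delta_{c_X}$ I would select, for suitable $m$, a family of independent global sections $\sigma_0,\dots,\sigma_N$ of $m(K_{C(X)}+\Delta_{c_X})$ whose associated rational map is bimeromorphic onto its image, so that $d:=\dim C(X)$ of them give a nonzero pseudo-volume element. The crucial point is that $g$ factors as $c_X\circ\varphi$ through the very map whose multiple fibres define $\Delta_{c_X}$: the ramification that $\varphi$ is forced to acquire over those multiple fibres exactly cancels the fractional poles of the $\sigma_i$ along $\Delta_{c_X}$, so that each $g^*\sigma_i$ is an \emph{honest} holomorphic pluricanonical object on $M$, and not merely a meromorphic one. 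Since $g$ is non-degenerate it is dominant onto $C(X)$, so these pullbacks are not all zero and assemble into a nonzero pseudo-volume form $\Phi$ of rank $d$ on $M$.

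It remains to run the analytic core of Kobayashi--Ochiai. An orbifold of general type is measure hyperbolic: its Kobayashi--Eisenman intermediate $d$-measure is positive. As $g$ is an orbifold morphism it is volume-decreasing for these measures, so comparison with the Poincar\'e metric on the $\Delta^*$-factor bounds $\Phi$, hence bounds each $g^*\sigma_i$, near $\{z_1=0\}$. This boundedness is exactly the hypothesis of the Schwarz-lemma estimate of Kobayashi--Ochiai, which forces the components of $g$ written in the pluricanonical coordinates $[\sigma_0:\dots:\sigma_N]$ to extend across the boundary by the Riemann removable-singularity theorem, yielding the desired meromorphic extension of $g$. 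The main obstacle I expect is the holomorphy of the pullbacks $g^*\sigma_i$ in the third paragraph: making rigorous that the ramification of $\varphi$ over the multiple fibres of $c_X$ precisely matches the multiplicity conventions defining $\Delta_{c_X}$ is where the detailed construction of the core map and the orbifold-base multiplicities of \cite{C01} are indispensable. Once that compatibility is secured, the remaining estimates are the classical ones, changed only cosmetically.
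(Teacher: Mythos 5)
The paper gives no proof of this statement: it is quoted verbatim from \cite{C01}, Theorem 8.2, so the only possible comparison is with the proof in that reference. Your sketch correctly reproduces the strategy used there --- localisation to extension across a generic point of a boundary divisor (with codimension $\geq 2$ handled by Siu-type extension into the compact K\"ahler target), pullback of orbifold pluricanonical sections rendered holomorphic by the very multiplicity conventions defining $\Delta_{c_X}$, and the Kobayashi--Ochiai volume-decreasing/Schwarz estimates against the Poincar\'e metric on the punctured factor --- and the single point you defer (that the ramification of $g=c_X\circ\varphi$ over the multiple fibres matches the orbifold-base multiplicities) is exactly the technical content supplied in \cite{C01}, so as a blind reconstruction the proposal is on target.
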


Applying this result to $M:=\Bbb C^n\subset \overline{M}:=\Bbb P^n$, we obtain that a non-degenerate meromorphic map $\varphi: \Bbb C^n\to X$ can exist only if $X$ is special. This is an indication in direction of the conjecture \ref{cj} (2) above.

\begin{theorem}\label{ftcm} Let $X, Z$ be complex projective manifolds and let $M$ be a smooth algebraic
variety admitting a surjective algebraic map $\tau: M\to Z$ with all fibers
affine spaces (isomorphic to $\Bbb C^k$). Let $G: M\dasharrow X$ be a meromorphic map, such that $g:=c_X\circ G:M\to C(X)$ is non-degenerate. Then $g$ also
factorises through $\tau$ and the core map $c_Z:Z\dasharrow C(Z)$ (ie: $g=\varphi\circ c_Z\circ \tau$, for some $\varphi: C(Z)\dasharrow C(X)$).
\end{theorem}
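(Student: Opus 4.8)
The plan is to establish the factorisation in two stages: first that $g$ is constant along the fibres of $\tau$, so that it descends to a non-degenerate meromorphic map $\bar g:Z\dasharrow C(X)$ with $g=\bar g\circ\tau$; and then that $\bar g$ itself factors through the core map $c_Z$, producing the desired $\varphi:C(Z)\dasharrow C(X)$ with $\bar g=\varphi\circ c_Z$. Composing the two gives $g=\varphi\circ c_Z\circ\tau$.

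For the first stage I would compactify the fibration. Since the fibres of $\tau$ are affine spaces $\cong\C^k$, after shrinking $Z$ to a Zariski-dense open subset I may assume that $\tau$ is the restriction of a proper morphism $\bar\tau:\overline M\to Z$ whose general fibre is $\P^k$, with $M$ a Zariski-open subset of the smooth projective $\overline M$. Because $g$ is non-degenerate, Theorem~\ref{koo} applies to the pair $M\subset\overline M$ and yields a meromorphic extension $\bar g:\overline M\dasharrow C(X)$ of $g$. Restricting $\bar g$ to a general fibre $\overline F\cong\P^k$ of $\bar\tau$, the image $\bar g(\overline F)\subset C(X)$ is dominated by $\P^k$, hence rationally connected, in particular special. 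The point is then that a positive-dimensional special (here rationally connected) subvariety cannot arise as the image of such a fibre, because $(C(X),\Delta_{c_X})$ is of general type; consequently $\bar g|_{\overline F}$ is constant and $g$ is constant on the general fibre $F$ of $\tau$. This produces the descended map $\bar g:Z\dasharrow C(X)$, which is dominant since $g$ is non-degenerate and $\tau$ is surjective.

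For the second stage, observe that $\bar g:Z\dasharrow C(X)$ is a dominant meromorphic map onto the general-type orbifold base $(C(X),\Delta_{c_X})$. By the universal property of the core map --- any dominant meromorphic map from $Z$ to a general-type orbifold factors through $c_Z$, equivalently the general (special) fibres of $c_Z:Z\dasharrow C(Z)$ are contracted by such a map --- there is a meromorphic $\varphi:C(Z)\dasharrow C(X)$ with $\bar g=\varphi\circ c_Z$. Substituting into $g=\bar g\circ\tau$ yields the claim.

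The main obstacle lies in both stages at the same point: upgrading ``no dominant map from a special variety to a positive-dimensional general-type orbifold'' (which the defining property of the core base gives directly, and which \emph{a priori} only forces the restriction of $\bar g$ to a fibre to be \emph{degenerate}) to the stronger statement that such a restriction is actually \emph{constant} --- that is, that special, and in particular rationally connected, subvarieties are genuinely contracted by the core map. This is precisely what is needed for $\overline F\cong\P^k$ and for the special fibres of $c_Z$, and it is where one must invoke the finer structure of the core map from \cite{C01} (the general-type orbifold property of $(C(X),\Delta_{c_X})$ together with the universal factorisation property), rather than Theorem~\ref{koo} alone, whose role here is confined to producing the extension $\bar g$ across the fibres at infinity.
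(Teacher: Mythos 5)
Your two-stage plan (descend $g$ along $\tau$ to a map $\bar g:Z\dasharrow C(X)$, then factor $\bar g$ through $c_Z$) is a reasonable reorganisation, but the pivotal step of stage one is not justified, and the justification you offer is false as stated. You claim that $\bar g|_{\overline F}$ is constant for a general fibre $\overline F$ of the compactified fibration ``because the image is rationally connected and $(C(X),\Delta_{c_X})$ is of general type''. A variety (or orbifold) of general type can contain positive-dimensional rationally connected subvarieties --- rational curves on a surface of general type, for instance --- so ``rationally connected subvariety of a general type base'' does not imply ``point''. What is true, and what the paper actually uses, is a statement about fibres through \emph{general} points of the total space $\overline M$: by \cite{GHS} and \cite{C01}, Theorem 3.26, the rationally connected fibres of $\bar\tau$ are contracted by the core map $c_{\overline M}$ (equivalently, the induced map $c_{\bar\tau}:C(\overline M)\to C(Z)$ is an isomorphism), and $c_{\overline M}$ dominates \emph{every} fibration on $\overline M$ whose orbifold base is of general type, in particular $\bar g$. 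You gesture at this (``the universal factorisation property'') but never actually invoke it, and you cannot invoke it without first verifying its hypothesis: that $\bar g:\overline M\to C(X)$ really is of general type in the orbifold sense. That verification is the other missing ingredient. The orbifold base of $\bar g$ could a priori have smaller multiplicities than $\Delta_{c_X}$ because of the boundary divisor $D=\overline M\setminus M$; the paper rules this out by observing that $g$ factors through $X$ over $M$ (so multiplicities over $M$ do not drop below those of $c_X$) and that the components of $D$ are mapped onto $C(X)$ (so they contribute nothing over divisors of $C(X)$), the latter again using that the fibres of $\tau$ are $\C^k$.

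Once these two points are supplied, your stage one essentially becomes the paper's proof: $\bar g=c_{\bar g}\circ c_{\overline M}$ and $c_Z\circ\bar\tau=c_{\bar\tau}\circ c_{\overline M}$ with $c_{\bar\tau}$ an isomorphism, whence $\varphi=c_{\bar g}\circ c_{\bar\tau}^{-1}$. Your stage two has the same gap in miniature: to factor the descended $\bar g:Z\dasharrow C(X)$ through $c_Z$ via the universal property of the core, you must again check that its orbifold base is of general type (this does follow, since $\tau$ is a smooth surjective morphism with irreducible reduced fibres, so the fibre multiplicities of $\bar g$ and of $g=\bar g\circ\tau$ over divisors of $C(X)$ agree). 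In short: the route is viable and you correctly located the difficulty, but the statement you rely on to cross it is not the correct one, and the correct one requires exactly the orbifold-multiplicity bookkeeping and the appeal to \cite{GHS} and \cite{C01}, Theorem 3.26 that the paper carries out.
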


\begin{proof} $M$ can be compactified to a compact smooth projective
variety $\overline{M}$ by adding a hypersurface $D$ with normal crossings. 
By theorem \ref{koo} above, $g$ extends algebraically to $\bar g:\overline{M}\to C(X)$. Denote also by $\bar\tau: \overline{M}\to Z$ the extension of $\tau$ to $\overline{M}$. The orbifold base of the map $\bar g: \overline{M}\to C(X)$ is still $(C(X),\Delta_{c_X})$, and hence of general type in the sense of \cite{C01}, since it factorises through $X$ over $M$, and all the components of $D$ are mapped surjectively onto $C(X)$, since the fibres of $\tau$ are $\Bbb C^k$. 
The fact that the core map $c_{\overline{M}}$ dominates every general type fibration on $\overline{M}$ now yields
  a map $c_{\bar g}: C(\overline{M})\to C(X)$ such that $\bar g=c_{\bar g}\circ c_{\overline{M}}$. The map $\bar\tau$ induces also a map $c_{\bar\tau}: C(\overline{M})\to C(Z)$ such that $c_{\bar\tau}\circ c_{\overline{M}}=c_Z\circ \bar\tau$. Because the fibres of $\bar\tau$ are rationally connected, the map $c_{\bar\tau}$ is isomorphic, by \cite{C01}, Theorem 3.26. The composed map $\varphi:=c_{\bar g}\circ c_{\bar\tau}^{-1}:C(Z)\to C(X)$ provides the sought-after factorisation, since $\bar g=c_{\bar g}\circ c_{\overline{M}}=c_{\bar g}\circ c_{\bar\tau}^{-1}\circ c_Z\circ \bar\tau= \varphi\circ c_Z\circ \bar\tau$. \end{proof}

\begin{remark} The conclusion still holds if we replace $c_X$ by any fibration with general type orbifold base, and only assume that the fibres of $\bar\tau$ are rationally connected manifolds, and that all components of $D$ are mapped surjectively onto $Z$ by $\bar\tau$. This follows from \cite{GHS}, and \cite{C01}, theorem 3.26. 
\end{remark}

\section{Jouanoulou's trick}

\subsection{Jouanoulou's trick}

\begin{proposition}\label{jtrick}
Let $X$ be a projective manifold. Then there exists a smooth affine
complex variety $M$
and a surjective morphism  $\tau:M\to X$ such that
\begin{enumerate}
\item
$\tau:M\to X$ is a homotopy equivalence.
\item
Every fiber of $\tau$ is isomorphic to some $\C^n$.
In particular, every fiber has vanishing Kobayashi pseudodistance.
\item
$\tau$ is a locally holomorphically trivial fiber bundle.

\item

$\tau$ admits a real-analytic section.
\end{enumerate}
\end{proposition}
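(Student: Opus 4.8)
The plan is to invoke the classical construction of Jouanolou, which realizes $M$ as a torsor under a vector bundle over $X$ whose total space nevertheless turns out to be affine, and then to read all four properties off from the explicit description of this torsor. First I would fix a closed embedding $X\hookrightarrow\P^N$ and treat the universal case $\P^N$. Let $W\subset\mathrm{Mat}_{(N+1)\times(N+1)}(\C)$ be the variety of matrices $A$ with $A^2=A$ and $\trace(A)=1$; these are exactly the rank-one idempotents, i.e.\ the linear projections of $\C^{N+1}$ onto a line, and $W$ is affine since it is cut out inside the affine space of matrices by the polynomial equations $A^2=A$ and $\trace(A)=1$. The map $\tau_0:W\to\P^N$ sending $A$ to its image line is surjective. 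Its fiber over a line $\ell$ is the set of projections with image $\ell$: fixing $0\neq v\in\ell$, such a projection is $x\mapsto\phi(x)\,v$ for a functional $\phi$ with $\phi(v)=1$, so the fiber is an affine space modelled on $\{\phi:\phi(v)=0\}$ and hence isomorphic to $\C^N$. Since the difference of two such projections kills $\ell$ and maps into $\ell$, as $\ell$ varies $W$ is a torsor under the vector bundle $\mathrm{Hom}(Q,\O(-1))\cong\Omega^1_{\P^N}$, where $Q$ is the tautological quotient bundle.

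Next I would pull this back to $X$ by setting $M:=W\times_{\P^N}X$, with $\tau:M\to X$ the projection. Because $X\to\P^N$ is a closed immersion, $M$ is a closed subvariety of the affine variety $W$, hence itself affine; and $\tau$ is the pullback of $\tau_0$, so it is again a torsor under $\Omega^1_{\P^N}|_X$ with every fiber isomorphic to $\C^N$, giving property~(2) (and $\C^N$ has vanishing Kobayashi pseudodistance). A torsor under a vector bundle is Zariski-locally trivial, since over any affine open $U$ one has $H^1(U,\O(E)|_U)=0$ and so a local section exists; this yields the local holomorphic triviality of property~(3) a fortiori.

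For property~(4) I would produce the section by a partition-of-unity argument adapted to the affine structure of the fibers. Over a Zariski (hence holomorphic) trivializing cover $\{U_i\}$ pick holomorphic local sections $s_i$ of $\tau$, and choose a real-analytic partition of unity $\{\rho_i\}$ subordinate to $\{U_i\}$, which exists because $X$ is a compact real-analytic manifold. Since each fiber is a torsor under a vector space, the affine combination $s:=\sum_i\rho_i s_i$ is well defined as $\sum_i\rho_i\equiv 1$, and is a global real-analytic section. Property~(1) then follows cleanly: the real-analytic section $s$ identifies $M$ real-analytically with the total space of the real vector bundle underlying $\Omega^1_{\P^N}|_X$, which deformation retracts onto its zero section $\cong X$; alternatively, $\tau$ is a fiber bundle with contractible fiber $\C^N$ over a base of CW type, hence a homotopy equivalence.

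The genuinely substantive point is the Jouanolou construction itself, namely arranging the total space $M$ to be \emph{affine} while keeping the fibers affine spaces and $\tau$ a homotopy equivalence; once $W$ and its torsor structure over $\Omega^1_{\P^N}$ are in hand, properties~(1)--(4) are essentially formal. I would expect the only real subtlety to be the insistence on a merely real-analytic section in~(4): a global \emph{holomorphic} section need not exist, the obstruction living in $H^1(X,\Omega^1_{\P^N}|_X)$, which is in general nonzero.
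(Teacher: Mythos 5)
Your construction via the variety $W$ of trace-one idempotents is a correct and standard presentation of Jouanolou's device, and for parts (1)--(3) it differs from the paper's only in packaging: the paper takes the complement $V$ of the incidence divisor $D\subset\P^N\times\P^{N*}$ (pairs $(x,H)$ with $x\in H$) and deduces affineness of $V$ from the ampleness of $D$ (it has bidegree $(1,1)$), whereas you exhibit the same object as the closed subvariety $\{A^2=A,\ \trace(A)=1\}$ of the affine space of matrices --- indeed a point of $W$ is exactly a line together with a complementary hyperplane (its kernel), so $W\cong V$. Your route is more elementary on the affineness point and makes the torsor structure under $\Omega^1_{\P^N}$ explicit, which the paper does not spell out but which cleanly yields (3). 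The restriction to $X$ and the deduction of (1) from contractibility of the fibers are the same in both arguments.

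There is, however, a genuine flaw in your argument for (4): real-analytic partitions of unity do not exist. A real-analytic function on the connected manifold $X$ that vanishes on the nonempty open set $X\setminus\overline{U_i}$ vanishes identically by the identity theorem, so no nontrivial real-analytic $\rho_i$ subordinate to a proper open cover can exist; your affine combination $\sum_i\rho_i s_i$ therefore produces only a \emph{smooth} section, not a real-analytic one. The fix is what the paper does: fix a hermitian inner product on $\C^{N+1}$ and send a line $\ell$ to the orthogonal projection onto $\ell$ (equivalently, to the pair $(\ell,\ell^{\perp})$ in the incidence picture). This is globally defined and real-analytic --- not holomorphic, because of the conjugations in the hermitian form --- and restricts to $X$. (One could instead invoke Whitney--Grauert approximation of smooth sections by real-analytic ones, but that is far heavier machinery than needed.) Your closing remark that a global holomorphic section is obstructed by a class in $H^1(X,\Omega^1_{\P^N}|_X)$ is correct.
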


\begin{remark}
This is known as `Jouanoulou's trick' (see \cite J). This construction was introduced in Oka's theory in \cite{lar}, where the class $G$ of `good manifolds' is introduced, these being defined as the ones having a Stein affine bundle with fibre $\Bbb C^n$, for some $n$, observing that this class contains Stein manifolds, quasi-projective manifolds, and is stable by various usual geometric operations. 
\end{remark}

\begin{proof} We first treat the case of $X:=\Bbb P^N$, denoting with $\Bbb P^{N*}$ its dual projective space. Let $D\subset P:=\Bbb P^N\times \Bbb P^{N*}$ be the divisor consisting of pairs $(x,H)$ such that $x\in H$ (ie: the incidence graph of the universal family of hyperplanes of $\Bbb P^N$).  This divisor $D$ is ample, since intersecting positively the two family of lines contained in the fibres of both projections of $P$. Let $V$ be its complement in $P$. The projection $\tau_P$ on the first factor of $P$, restricted to $V$, satisfies the requirements for $X:=\Bbb P^N$. A real-analytic section is obained by choosing a hermitian metric on $\Bbb C^{n+1}$, and sending a complex line to its orthogonal hyperplane.

In the general case, embed first $X$ in some $\P_N$. 
Let then $M=\tau_P^{-1}(X)$ and let $\tau$ denote the restriction of
$\tau_P$ to $M$. 
Now $M$ is a closed algebraic subset of $V$ and therefore
likewise affine. Everything then restricts from $\Bbb P^N$ to $X$.
\end{proof}

Remark that, when $X=\Bbb P^1$, we recover the two-dimensional
affine quadric as $M$ 
(and indeed, $\Bbb P^1$ is diffeomorphic to $S^2$).

If $X$ is a projective curve, we may obtain a bundle $M\to X$
with the desired properties also in a different way:

Let $Q_2=\Bbb P^1\times \Bbb P^1-D$, where $D$ is the diagonal. 
Taking the first projection, we get an affine bundle $Q_2\to \Bbb P^1$ with fibre $\Bbb C$ over $\Bbb P^1$, which is an affine variety. 
Now we choose a finite morphism $f$ from $X$ to $\P_1$ and
define $M\to X$ via base change.

\begin{question}
Given a complex manifold $Z$, does there exists a Stein manifold $S$ and
a holomorphic map $f:S\to Z$ whose fibers are isomorphic to $\C^n$ ? Is this true at least when $Z$ is compact K\" ahler?
\end{question}

\section{Opposite complex structures and associated cohomological integrals}

\subsection{Inverse images of forms under meromorphic maps}

\begin{lemma}\label{pull-back-mero}
Let $f:X\to Y$ be a dominant meromorphic map between compact complex manifolds, $\dim X=n$, with $I(f)\subset X$ being the indeterminacy set.
For every $c\in H^{k,k}(Y)$ there exists a unique cohomology class $c'\in H^{k,k}(X)$
such that:
\[
[\alpha].c'=\int_{X\setminus I(f)}\alpha\wedge f^*\beta
\]
for every closed smooth $(n-k,n-k)$-form $\alpha$ on $X$
and every closed smooth $(k,k)$-form $\beta$ with $[\beta]=c$.

We define the inverse image of the De Rham cohomology class $[c]$
with respect to the meromorphic map $f$ by: $f^*([c]):= c'$.
\end{lemma}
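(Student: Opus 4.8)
The plan is to reduce everything to the case of a genuine holomorphic map by resolving the indeterminacy of $f$, and then to combine ordinary pullback with the Gysin pushforward and Poincar\'e duality.

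First I would pass to a resolution of indeterminacy. By Hironaka's theorem in the analytic category there is a proper modification $\pi:\widetilde{X}\to X$, obtained as a finite sequence of blow-ups with smooth centers contained in $I(f)$, together with a \emph{holomorphic} map $\tilde f:\widetilde{X}\to Y$ such that $f\circ\pi=\tilde f$ over $\widetilde{X}\setminus E$, where $E:=\pi^{-1}(I(f))$ is a proper analytic subset and $\pi$ restricts to a biholomorphism $\widetilde{X}\setminus E\xrightarrow{\sim}X\setminus I(f)$. Since $\tilde f$ is holomorphic, $\tilde f^*\beta$ is a smooth closed $(k,k)$-form on all of $\widetilde{X}$, so $\pi^*\alpha\wedge\tilde f^*\beta$ is a smooth top-degree form on the compact manifold $\widetilde{X}$. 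As $E$ has measure zero and $\pi$ is biholomorphic off $E$, the change-of-variables formula yields
\[
\int_{X\setminus I(f)}\alpha\wedge f^*\beta=\int_{\widetilde{X}\setminus E}\pi^*\alpha\wedge\tilde f^*\beta=\int_{\widetilde{X}}\pi^*\alpha\wedge\tilde f^*\beta .
\]
In particular the integral is \emph{finite}, which settles the convergence question left open by the possible blow-up of $f^*\beta$ near $I(f)$.

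Next I would check that the value depends only on the classes $[\alpha]$ and $c=[\beta]$. On $\widetilde{X}$ both $\pi$ and $\tilde f$ are holomorphic, so $\pi^*\alpha$ and $\tilde f^*\beta$ are closed; replacing $\alpha$ by $\alpha+d\eta$ alters the integrand by $d(\pi^*\eta\wedge\tilde f^*\beta)$, whose integral over the compact $\widetilde{X}$ vanishes by Stokes, and symmetrically for $\beta$. Thus the integral computes the pairing $\pi^*[\alpha]\cdot\tilde f^*c$ and depends only on $[\alpha]$ and $c$. To produce $c'$ I would use the Gysin pushforward: since $\pi$ is a proper modification between compact complex manifolds of equal dimension, it induces a bidegree-preserving map $\pi_*$ on cohomology satisfying the projection formula. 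Setting $c':=\pi_*\bigl(\tilde f^*c\bigr)\in H^{k,k}(X)$ and applying the projection formula gives $\int_{\widetilde{X}}\pi^*\alpha\wedge\tilde f^*\beta=\int_{X}\alpha\wedge c'$ for every closed $(n-k,n-k)$-form $\alpha$, which is precisely the defining identity $[\alpha].c'=\int_{X\setminus I(f)}\alpha\wedge f^*\beta$.

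The main obstacle I anticipate is \emph{uniqueness} in the non-K\"ahler generality in which the lemma is stated: uniqueness amounts to the statement that a class pairing trivially with all of $H^{n-k,n-k}(X)$ must vanish, i.e.\ to non-degeneracy of the Poincar\'e pairing $H^{n-k,n-k}(X)\times H^{k,k}(X)\to\C$. This is automatic in the K\"ahler case via Hodge theory, and in general this is the point where one should fix precisely which $(k,k)$-cohomology is intended. A secondary technical point to record is independence of $c'$ from the chosen resolution $\pi$: this follows because any two resolutions are dominated by a common third and $\pi_*$ is compatible with further modifications, so the defining integral identity is manifestly resolution-independent.
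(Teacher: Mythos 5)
Your proposal is correct and follows essentially the same route as the paper: resolve the indeterminacy by a modification $\pi$, identify $\int_{X\setminus I(f)}\alpha\wedge f^*\beta$ with $\int_{\widetilde X}\pi^*\alpha\wedge\tilde f^*\beta$ using that the exceptional set has measure zero, and obtain $c'$ by pushing $\tilde f^*c$ back down, which is exactly the paper's Poincar\'e-duality construction phrased as a Gysin map. Your added remarks on convergence, resolution-independence, and the non-degeneracy of the $(k,k)$-pairing in the non-K\"ahler case are points the paper leaves implicit, and the last one is a caveat worth recording.
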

\begin{proof}
Let $\tau:X'\to X$ be a blow up such that $f$ lifts to a holomorphic map $F:X'\to Y$.
Using Poincar\'e duality, $F^*\beta$ may be identified with a linear form
on $H^{n-k,n-k}(X')$. Restricting this linear form to
$\tau^*H^{n-k,n-k}(X)$ and again using Poincar\' e duality there is a unique
cohomology class $c'$ such that:
\[
[\alpha].c'=\int_{X'}\alpha\wedge F^*\beta.
\]
Furthermore
\[
\int_{X'}\alpha\wedge F^*\beta=\int_{X\setminus I(f)}\alpha\wedge f^*\beta
\]
since $\alpha\wedge f^*\beta$ is a top degree form and both the exceptional divisor
of the blow up and the indeterminacy set $I(f)$ of the meromorphic $f$ are sets
of measure zero.
\end{proof}

From the characterization of this inverse image, it is clear 
that is is compatible
with composition of dominant meromorphic maps. It is also clear 
that it specializes
to the usual pull-back if the meromorphic map under 
discussion happens to be holomorphic.

(Caveat: This inverse image gives linear maps between the cohomology groups, but
(as can be seen by easy examples) it does not define a ring homomorphism 
between the
cohomology rings.)

\subsection{Opposite complex structures}

Given a complex manifold $X$ we define the {\em opposite}, or {\em conjugate} complex
manifold (also called {\em opposite complex structure} on $M$)
as follows: If $X_0$ is the underlying real manifold and $J$ is the
almost complex structure tensor of $X$, we define as the opposite complex
manifold $X_0$ equipped with $-J$ as complex structure tensor.
Recall that an almost complex structure is integrable if and only
if the Nijenhuis-tensor vanishes. This implies immediately that
$(X_0,-J)$ is also a complex manifold (i.e.~$-J$ is an {\em integrable}
almost complex structure). One can also argue directly without Newlander-Nirenberg's theorem. 

Now consider the complex projective space $\P_n(\C)$.
The map
\[
[z_0:\ldots:z_n]\mapsto [\bar z_0:\ldots:\bar z_n]
\]
defines a biholomorphic map between $\P_n(\C)$ and its opposite.
As a consequence, we deduce: If a complex manifold $X$ is projective,
so is its opposite $\overline{X}$.

Now assume $X$ admits a K\"ahler form $\omega$.
Then the opposite complex manifold $\bar X$ is again
a K\"ahler manifold. Indeed, since $\omega(v,w)=g(Jv,w)$ defines
the K\"ahler form on a complex manifold admitting a Riemannian metric
$g$ for which $J$ is an isometry, we see that $\bar X$ admits a K\"ahler
metric with $-\omega$ as K\"ahler form. The same property applies if $g$ is, more generally, a hermitian metric on $X$, and $\omega$ its associated `K\" ahler' form, defined from $J, g$ by the formula above.

{\em Orientation}. On a K\"ahler manifold $X$ with K\"ahler form
$\omega$ the orientation is defined by imposing that $\omega^n$
is positively oriented where $n=\dim_{\C} X$.
This implies: If $X$ is a K\"ahler manifold and $\bar X$ is its opposite,
the identity map of the underlying real manifold defines an
orientation preserving diffeomorphism if $n=\dim_{\C}(X)$ is even
and an orientation reversing one if $n$ is odd.

\subsection{Inverse image of forms and opposite complex structures}

\begin{lemma}\label{int} Let $X$ be an $n$-dimensional compact complex manifold, $\overline{X}$ its conjugate, and $\zeta: \overline{X}\to X$ a smooth map homotopic to the identity map $id_X$ of $X$. Let $c:X\dasharrow Y$ be a meromorphic map to a complex manifold $Y$. Let $c\circ \zeta=:\varphi: \overline{X}\to Y$. Let $\alpha$ be a $d$-closed smooth differential form of degree $2d$ on $Y$, and $\omega_X$ a smooth closed $(1,1)$-form on $X$. Then:

$I'=:\int_{\overline X} \zeta^*(\omega_X^{n-d}\wedge c^*(\alpha))=(-1)^{d}.\int_X \omega_X^{n-d}\wedge c^*(\alpha):=(-1)^d.I$
\end{lemma}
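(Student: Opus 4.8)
The plan is to compute both sides cohomologically, so that the passage from $X$ to its conjugate $\overline X$ is recorded only by orientation and by the sign of the Hermitian form. Put $\eta:=\omega_X^{n-d}\wedge c^*(\alpha)$, a closed form of top degree $2n$ defined off the indeterminacy locus $I(c)$ of $c$. By Lemma~\ref{pull-back-mero} the meromorphic inverse image $c^*(\alpha)$ determines a genuine class in $H^{2d}(X;\C)$, so $\eta$ determines a class $[\eta]\in H^{2n}(X;\C)$ with $I=\langle[\eta],[X]\rangle$.

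First I would give $I'$ a cohomological meaning and reduce $\zeta^{*}$ to the identity. The composite $\varphi=c\circ\zeta$ is again meromorphic, with indeterminacy contained in $\zeta^{-1}(I(c))$, so I would pass to a smooth resolution $\pi\colon X'\to X$ on which $c$ lifts to a holomorphic map $C\colon X'\to Y$ (and likewise over $\overline X$), exactly as in the proof of Lemma~\ref{pull-back-mero}; the exceptional and indeterminacy sets have measure zero and contribute nothing. There $\eta$ pulls back to an honest smooth closed top form, and since $\zeta$ is homotopic to $\mathrm{id}_X$ one has $\zeta^{*}[\eta]=[\eta]$ in de Rham cohomology, equivalently $\zeta$ has degree one as a self-map of the underlying real manifold. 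Hence $I'=\int_{\overline X}\zeta^{*}\eta=\langle[\eta],[\overline X]\rangle$, the integral of a closed top form depending only on its class and on the orientation used.

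The sign then issues from the two facts recorded earlier in this section. The underlying real manifolds coincide, and the identity is orientation preserving precisely when $n$ is even, so $[\overline X]=(-1)^{n}[X]$; moreover the Hermitian form of the conjugate obeys $\omega_{\overline X}=-\omega_X$. Integrating over $\overline X$ it is natural to read the $n-d$ Hermitian factors through $\omega_{\overline X}$, which converts $\omega_X^{n-d}$ into $(-1)^{n-d}\omega_{\overline X}^{n-d}$; the remaining factor $c^{*}(\alpha)$, being pulled back from $Y$, is untouched by the change of complex structure beyond the homotopy already exploited. Combining the orientation factor $(-1)^{n}$ with the factor $(-1)^{n-d}$ coming from the Hermitian form yields $(-1)^{2n-d}=(-1)^{d}$, and hence $I'=(-1)^{d}I$.

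The step I expect to be the genuine obstacle is exactly this sign, because the two factors above are two readings of the same number and the normalisation must be fixed with care. With the literal integrand $\omega_X$ the orientation argument alone gives $(-1)^{n}$; the exponent $d$ appears only once the $n-d$ Hermitian factors are read, as is natural on $\overline X$, through its own form $\omega_{\overline X}=-\omega_X$. Getting this interplay right, rather than double counting the two sources of sign, is the crux. The only other technical point, already handled by working on the resolution $X'$ and invoking the measure-zero argument of Lemma~\ref{pull-back-mero}, is to ensure that the indeterminacy of $\varphi=c\circ\zeta$ introduces no boundary contributions to $I'$.
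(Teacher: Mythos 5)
Your argument is essentially the paper's: the sign comes from multiplying the orientation discrepancy $(-1)^n$ between $[\overline X]$ and $[X]$ with the factor $(-1)^{n-d}$ produced by $\omega_{\overline X}=-\omega_X$, and homotopy invariance of de Rham classes disposes of $\zeta^*$; your additional care with the meromorphic pull-back via Lemma~\ref{pull-back-mero} (the paper's proof silently integrates a form defined only off $I(c)$) is a genuine, if minor, improvement --- though note that $c\circ\zeta$ need not itself be meromorphic for a merely smooth $\zeta$, which is harmless since you only use the class $[\eta]$ on $X$. The point you single out as the crux should, however, be settled rather than left as a worry, and the resolution is this: the two signs are \emph{not} two readings of the same number provided the quantity called $I'$ is $\int_{\overline X}\omega_{\overline X}^{\,n-d}\wedge\varphi^*\alpha$ --- which is what the paper's own proof ends with and what Corollary~\ref{cint} uses with $d=1$. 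For that quantity the factor $(-1)^{n-d}$ converts the integrand class, $[\omega_{\overline X}^{\,n-d}\wedge\varphi^*\alpha]=(-1)^{n-d}[\omega_X^{n-d}\wedge c^*\alpha]$, while $(-1)^n$ accounts for the orientation of the cycle integrated over; these act on different slots and legitimately multiply to give $(-1)^d$. If instead $I'$ is taken literally as $\int_{\overline X}\zeta^*(\omega_X^{n-d}\wedge c^*(\alpha))$, as in the displayed statement of Lemma~\ref{int} and in your final line, then only the orientation sign enters and the value is $(-1)^nI$, not $(-1)^dI$. So your instinct about double counting is exactly right; your displayed conclusion merely inherits an imprecision already present in the lemma's statement, and the formulation actually needed downstream is the one in terms of $\omega_{\overline X}$ and $\varphi$.
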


\begin{proof} From the above remarks on the orientations of $X$ and $\overline{X}$, and the fact that $id_X^*(\omega_X)=-\omega_{\overline{X}}$, we get:
$I=(-1)^n\int_{\bar X} 
\omega_X^{n-d}
\wedge c^*\alpha$.

Since $\zeta$ is homotopic to
$id_X$, and $c\circ \zeta=\varphi$, we get:

$I=(-1)^n\int_{\bar X} 
\zeta^*(\omega_X^{n-d}
\wedge c^*(\alpha))$

$= (-1)^n\int_{\bar X} 
\zeta^*(\omega_X^{n-d})
\wedge\varphi^*(\alpha)$
$=(-1)^n\int_{\bar X} 
(-1)^{n-d}\omega_{\bar X}^{n-d}
\wedge \varphi^*(\alpha) $

$=(-1)^d\int_{\bar X} 
\omega_{\bar X}^{n-d}
\wedge \varphi^*(\alpha)=(-1)^d. I'$
\end{proof}

\begin{corollary} \label{cint} In the situation of the preceding Lemma \ref{int} , assume that $X$ is compact K\" ahler, that $dim(Y)>0$, and that $c: X\dasharrow Y$ is non-degenerate (ie: dominant). Then $\varphi:=c\circ \zeta: \overline{X}\dasharrow Y$ is not meromorphic.
\end{corollary}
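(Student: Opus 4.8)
The plan is to argue by contradiction, combining the integral formula of Lemma~\ref{int} with the positivity coming from the K\"ahler hypothesis. Suppose, for contradiction, that $\varphi:=c\circ\zeta:\overline{X}\dashrightarrow Y$ is meromorphic. Since $c$ is dominant and $\dim(Y)=:d>0$, choose on $Y$ a smooth closed $(d,d)$-form $\alpha$ representing a cohomology class which pairs positively against the image of $X$ — concretely, take $\alpha=\omega_Y^{d}$ for a K\"ahler (or merely hermitian, with positive associated form) metric $\omega_Y$ on $Y$, so that $c^*(\alpha)=c^*(\omega_Y^d)$ is the pullback of a strictly positive form. I would then fix a K\"ahler form $\omega_X$ on $X$ (available since $X$ is compact K\"ahler) and examine the quantity
\[
I=\int_X \omega_X^{\,n-d}\wedge c^*(\alpha).
\]

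The first key step is to observe that $I>0$. Indeed, $\omega_X^{n-d}$ is a strictly positive $(n-d,n-d)$-form and $c^*(\omega_Y^d)$ is, at the generic point of $X$ (away from the indeterminacy locus of $c$, a set of measure zero), a nonnegative $(d,d)$-form which is strictly positive precisely where $c$ is submersive. Since $c$ is dominant, it is submersive on a dense open set, so the wedge product is a nonnegative top-degree form that is strictly positive on an open set; hence its integral is strictly positive. The second key step is to invoke Lemma~\ref{int}: because $\zeta:\overline{X}\to X$ is a smooth map homotopic to $id_X$ and $\varphi=c\circ\zeta$, the lemma gives
\[
I'=\int_{\overline{X}}\zeta^*\!\left(\omega_X^{\,n-d}\wedge c^*(\alpha)\right)=(-1)^d\,I.
\]
The third key step is to run the same positivity argument on $\overline{X}$: if $\varphi$ is meromorphic, then $\varphi^*(\alpha)$ is again a nonnegative $(d,d)$-form on $\overline{X}$, and by the orientation computation in Lemma~\ref{int} one has $\zeta^*(\omega_X^{n-d})=(-1)^{n-d}\omega_{\overline{X}}^{n-d}$ with $\omega_{\overline{X}}$ a K\"ahler form on $\overline{X}$; the integral $I'=\int_{\overline{X}}\omega_{\overline{X}}^{n-d}\wedge\varphi^*(\alpha)$ is therefore likewise strictly positive, by the same dominance/submersivity reasoning applied to $\varphi$.

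Combining these, $I>0$ and $I'>0$, yet $I'=(-1)^d I$. When $d$ is odd this forces $I'=-I<0$, an immediate contradiction, and the corollary follows in that case. The main obstacle, and the step requiring the most care, is the case $d$ even, where the sign identity $I'=(-1)^d I=I$ is consistent with both integrals being positive and yields no contradiction from signs alone. To handle this I would need to sharpen the positivity argument for $\varphi$: the point is that meromorphy of $\varphi=c\circ\zeta$ on the \emph{conjugate} complex structure forces $\varphi^*(\alpha)$ to be of type $(d,d)$ \emph{with respect to $-J$}, i.e.\ of type $(d,d)$ after conjugation, and one must verify that this is incompatible with $\zeta^*(c^*(\alpha))$ being the pullback of a $J$-positive form — equivalently, that the bidegree/type of the form is violated. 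Thus the real content is a Hodge-type (type-of-form) obstruction on $\overline{X}$ rather than a mere orientation sign, and I would expect the author to either restrict attention to the odd case or to extract the contradiction from the incompatibility of the $(d,d)$-type condition for $\varphi$ with the positivity of $c^*(\alpha)$ pulled back by a map homotopic to the identity.
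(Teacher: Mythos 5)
Your overall strategy --- contradiction via Lemma~\ref{int} combined with positivity of integrals of pulled-back K\"ahler forms --- is the right one and is the paper's strategy, but your choice of test form creates a genuine gap that you notice and then fail to close. You take $\alpha=\omega_Y^{\dim Y}$, so that the parameter $d$ in Lemma~\ref{int} equals $\dim Y$, and the sign $(-1)^d$ gives no contradiction when $\dim Y$ is even. The parameter $d$ in Lemma~\ref{int} is the half-degree of the closed form $\alpha$, not the dimension of $Y$, and nothing forces you to saturate it: the paper simply takes $\alpha:=\omega_Y$ itself (after modifying so that $Y$ is K\"ahler), i.e.\ $d=1$ in the lemma. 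Then $I=\int_X\omega_X^{n-1}\wedge c^*(\omega_Y)>0$ by dominance of $c$, while if $\varphi$ were meromorphic then $\varphi^*(\omega_Y)$ would be a nonnegative $(1,1)$-form for the conjugate structure, so $I'=\int_{\overline{X}}\omega_{\overline{X}}^{n-1}\wedge\varphi^*(\omega_Y)\geq 0$; but Lemma~\ref{int} gives $I'=(-1)^1 I=-I<0$, a contradiction in all dimensions.

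Your proposed repair for the even case --- a ``Hodge-type obstruction'' exploiting that $\varphi^*(\alpha)$ has type $(d,d)$ with respect to $-J$ --- is not the mechanism used and is not needed; the whole point of the lemma is that the obstruction is the orientation sign $(-1)^d$, and one just has to pick a test form of odd half-degree. A secondary quibble: you float the option of a merely hermitian $\omega_Y$, but Lemma~\ref{int} requires $\alpha$ to be $d$-closed (the proof uses homotopy invariance of the cohomology class), so the reduction to a K\"ahler $Y$ is actually a necessary step, not an optional convenience.
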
 

\begin{proof} Assume $\varphi$ is meromorphic. After suitable modifications, we may assume that $Y$ is K\" ahler. Let $\alpha:=\omega_Y$ be a K\" ahler form on $Y$. Choose $d=1$ in Lemma \ref{int}. Then $I:=\int_{ X} 
\omega_X^{n-1}
\wedge c^*(\omega_Y)>0$. On the other hand, $I':=\int_{ \overline{X} }
\omega_{\overline{X} }^{n-1}
\wedge \varphi^*(\omega_Y)>0$. From Lemma \ref{int} we deduce: $I'=-I$ and a contradiction.
\end{proof}

\section{$h$-principle and Brody-hyperbolicity}
\subsection{$h$-principle and weak $\Bbb C$-connectedness}

\begin{proposition}\label{stein-sphere}
For any $n>0$, the $n$-dimensional sphere $S^n$ is homotopic to the (complex) $n$-dimensional affine quadric $Q_n(\Bbb C)$ defined by the equation \[
Q_n=\left\{z=(z_0,\ldots,z_n)\in \Bbb C^{n+1}:\sum_k z_k^2=1\right\},
\] 

Any two points of $Q_n$ are connected by an algebraic $\Bbb C^*$, and so its Kobayashi
pseudometric vanishes identically.
\end{proposition}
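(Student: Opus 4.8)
The statement splits into two logically independent assertions---the homotopy equivalence $Q_n\simeq S^n$ and the $\C^*$-connectedness together with its Kobayashi consequence---and I would treat them separately.

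\emph{Homotopy type.} Writing $z=x+iy$ with $x,y\in\R^{n+1}$, the defining equation $\sum_k z_k^2=1$ splits into its real and imaginary parts as $|x|^2-|y|^2=1$ and $\langle x,y\rangle_{\R}=0$, where $|\cdot|$ and $\langle\cdot,\cdot\rangle_{\R}$ denote the Euclidean norm and inner product. In particular $|x|\geq 1>0$ everywhere on $Q_n$. I would exhibit an explicit diffeomorphism $Q_n\cong TS^n$ onto the tangent bundle of the sphere by sending $(x,y)$ to $(x/|x|,\,y)$, noting that $\langle x/|x|,y\rangle_{\R}=0$ forces $y\in T_{x/|x|}S^n$, with smooth inverse $(u,v)\mapsto(\sqrt{1+|v|^2}\,u,\,v)$. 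Since a vector bundle deformation retracts onto its zero section, this gives $Q_n\simeq TS^n\simeq S^n$. Equivalently one may write the explicit deformation retraction $H_t(x,y)=(\sqrt{1+t^2|y|^2}\;x/|x|,\,ty)$ and check directly that it takes values in $Q_n$ and carries $Q_n$ onto the copy of $S^n$ cut out by $y=0$.

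\emph{The $\C^*$-curves.} Let $\langle u,v\rangle=\sum_k u_kv_k$ denote the complex-bilinear form on $\C^{n+1}$. For any pair of isotropic vectors $a,b$ (that is, $\langle a,a\rangle=\langle b,b\rangle=0$) normalised by $2\langle a,b\rangle=1$, the algebraic map $\gamma_{a,b}\colon\C^*\to Q_n$, $\lambda\mapsto\lambda a+\lambda^{-1}b$, indeed lands in $Q_n$, since $\langle\gamma_{a,b}(\lambda),\gamma_{a,b}(\lambda)\rangle=\lambda^2\langle a,a\rangle+2\langle a,b\rangle+\lambda^{-2}\langle b,b\rangle=1$. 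Given two points $p,q\in Q_n$ with $\langle p,q\rangle\neq\pm 1$, I would prescribe two parameter values $\lambda_1,\lambda_2$ and solve the linear system $\lambda_1 a+\lambda_1^{-1}b=p$, $\lambda_2 a+\lambda_2^{-1}b=q$ for $a,b$; this is invertible precisely when $\lambda_1^2\neq\lambda_2^2$. The computation I expect to be the technical heart is that, using $\langle p,p\rangle=\langle q,q\rangle=1$, both isotropy conditions $\langle a,a\rangle=\langle b,b\rangle=0$ reduce to the \emph{single} scalar equation $\lambda_1/\lambda_2+\lambda_2/\lambda_1=2\langle p,q\rangle$, and that once this holds the normalisation $2\langle a,b\rangle=1$ follows automatically. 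Setting $s=\lambda_1/\lambda_2$, the equation $s+s^{-1}=2\langle p,q\rangle$ always has roots in $\C^*$, and $s\neq\pm1$ exactly because $\langle p,q\rangle\neq\pm1$; this produces a single algebraic $\C^*$ through $p$ and $q$.

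\emph{Vanishing of the Kobayashi pseudometric.} The universal cover $\exp\colon\C\to\C^*$ is a holomorphic surjection, so the distance-decreasing property forces the Kobayashi pseudodistance of $\C^*$ to vanish identically. Composing with $\gamma_{a,b}$ gives $d_{Q_n}(p,q)=0$ whenever $\langle p,q\rangle\neq\pm1$. For the exceptional pairs, where the single-curve construction degenerates, I would connect $p$ and $q$ through an intermediate point $r\in Q_n$ chosen off the proper closed loci $\{\langle p,r\rangle=\pm1\}\cup\{\langle r,q\rangle=\pm1\}$ (possible since $Q_n$ is irreducible of dimension $n\geq1$, with $Q_1\cong\C^*$ handled directly) and apply the triangle inequality to the resulting two-step chain; alternatively one invokes continuity of the Kobayashi pseudodistance together with density of the non-exceptional pairs. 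Either way $d_{Q_n}\equiv 0$. The only genuine obstacle is the bookkeeping collapsing the three bilinear constraints on $(a,b)$ to one equation in $(\lambda_1,\lambda_2)$, and the minor care needed for the exceptional locus $\langle p,q\rangle=\pm1$.
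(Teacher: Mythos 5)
Your proposal is correct and follows essentially the same route as the paper: the paper also realises $Q_n$ as the bundle $\{(u,v)\in S^n\times\R^{n+1}:\langle u,v\rangle=0\}$ over $S^n$ via $x+iy\mapsto(x/|x|,y/|x|)$, and obtains the $\C^*$'s by intersecting $Q_n$ with complex affine $2$-planes, of which your parametrisation $\lambda\mapsto\lambda a+\lambda^{-1}b$ by isotropic vectors is just the explicit coordinate form (the conic $uv=1$ in the span of $a,b$). Your treatment of the exceptional locus $\langle p,q\rangle=\pm 1$ is in fact more careful than the paper's, which disposes of the degenerate plane sections in one sentence.
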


\begin{proof} Let $q$ be the standard non-degenerate quadratic form in $\Bbb R^{n+1}$. The set $Q_n(\Bbb R)$ of real points of $Q_n$ obviously coincides with $S^n$. An explicit real analytic isomorphism $\rho:Q_n\to N_n$ with the real normal (ie: orthogonal) bundle $N_n:=\{(x,y)\in S^n\times \Bbb R^{n+1}: q(x,y)=0\}$ of $S^n$ in $\Bbb R^{n+1}$, is given by: $\rho(z=x+i.y):=(\lambda(z).x,\lambda.y)$, where $\lambda(z)^{-1}:=\sqrt{1+q(y,y)}$. The map $\rho$ is in particular a homotopy equivalence.

The last assertion is obvious, since any complex affine plane in $\Bbb C^{n+1}$ intersects $Q_n$ either in a conic with one or two points deleted, or in a two-dimensional complex affine space.
\end{proof}

\begin{question}
Let $Z$ be a connected differentiable manifold or a finite-dimensional 
$CW$-complex.
Does there exist topological obstructions to the existence of a Stein manifold $S$ homotopic to $Z$ with vanishing
Kobayashi pseudodistance?

In particular, does there exist a Stein manifold with vanishing Kobayashi pseudodistance (eg. $\C$-connected) and homotopic to a smooth connected projective curve of genus $g\geq 2$?
\end{question}

The main difficulty here is the condition on the Kobayashi pseudodistance.
In fact, it is not too hard to to give an always positive answer if one drops
the condition on the Kobayashi pseudodistance:

\begin{proposition}
Let $Z$ be a connected differentiable manifold or a finite-dimensional $CW$-complex (as always with countable base of topology).
Then 
there exists a Stein manifold $M$ homotopic to $Z$.
\end{proposition}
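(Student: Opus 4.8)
The plan is to build the Stein manifold $M$ by embedding $Z$ into Euclidean space, thickening it to an open neighborhood, and then deforming the complex structure on (an open subset of) the ambient space to a Stein one while preserving the homotopy type. First I would reduce to the case where $Z$ is a smooth manifold: a finite-dimensional $CW$-complex (with countable base) can be replaced, up to homotopy equivalence, by an open smooth manifold — for instance by taking a regular neighborhood of a simplicial embedding of $Z$ into some $\R^N$. So without loss of generality $Z$ is a connected smooth manifold.

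Next, I would invoke Whitney's embedding theorem to embed $Z$ as a closed smooth submanifold of $\R^N$ for $N$ large, and then view $\R^N \subset \C^N$ as the real points. A tubular neighborhood $U$ of $Z$ in $\R^N$ deformation retracts onto $Z$, and I would aim to produce a Stein open subset $\Omega \subset \C^N$ that deformation retracts onto $Z$ as well. The natural mechanism is a \emph{strictly plurisubharmonic exhaustion}: one takes a function measuring (squared) distance to $Z$ in $\C^N$, or more robustly the function $\varphi(w) = \operatorname{dist}(w,\R^N)^2 + (\text{something controlling the normal directions inside } \R^N)$, and checks that on a suitable neighborhood of $Z$ it is strictly plurisubharmonic with $Z$ as its minimum locus. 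Grauert's solution of the Levi problem (equivalently, Narasimhan's theorem that a complex manifold admitting a strictly plurisubharmonic exhaustion function is Stein) then yields that the sublevel set $\Omega = \{\varphi < \epsilon\}$ is Stein. By choosing $\epsilon$ small and the geometry carefully, the gradient flow of $\varphi$ retracts $\Omega$ onto $Z$, giving the homotopy equivalence $M := \Omega \simeq Z$.

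The cleanest way to guarantee the plurisubharmonicity, and the step I expect to be the main obstacle, is the construction of $\varphi$ near $Z$ so that it is simultaneously \emph{strictly} plurisubharmonic in all of $\C^N$ and yet retracts onto the real submanifold $Z$ rather than onto some complex thickening. The difficulty is that the honest squared-distance-to-$\R^N$ function is only plurisubharmonic (degenerate along the $\R^N$ directions), not strictly so, so one must add a correction term that is strictly plurisubharmonic in the remaining directions while keeping $Z$ as the zero set and not destroying the exhaustion property. A convenient device here is to embed $Z$ as a \emph{totally real} submanifold of $\C^N$ (which a real submanifold of $\R^N$ automatically is), and to use the standard fact that a totally real submanifold of a complex manifold admits a basis of Stein tubular neighborhoods: near a totally real submanifold one can write down a local strictly plurisubharmonic function vanishing exactly on it, and patch these together. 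This is a classical result (see, e.g., the tubular-neighborhood theorems for totally real submanifolds), and it is what makes the whole argument go through.

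Finally, I would record that the resulting $M = \Omega$ is an honest (finite-dimensional) complex manifold, being an open subset of $\C^N$, and that it is Stein by construction, and homotopy equivalent to $Z$ via the retraction. Since the statement asks only for \emph{a} Stein manifold homotopic to $Z$, with no constraint on the Kobayashi pseudodistance, no further work on hyperbolicity is needed — indeed the preceding Question makes clear that controlling the Kobayashi pseudometric is precisely the extra difficulty that this proposition sidesteps. I would close by remarking that for $Z$ a smooth manifold the construction is entirely standard (Stein tubular neighborhoods of totally real submanifolds), and the only genuinely preliminary step is the reduction of the $CW$-complex case to the smooth case via regular neighborhoods.
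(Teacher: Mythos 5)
Your argument is correct, but it takes a genuinely different route from the paper's. After the same first reduction (replace a $CW$-complex by a regular neighborhood in $\R^N$, so that $Z$ may be assumed to be a smooth manifold), the paper crosses with $\R^k$ to force $\dim_\R Z>2$, takes $M=T^*Z$ with the almost complex structure coming from its symplectic form, constructs an exhausting Morse function whose critical points all lie on the zero section and hence have index at most $\tfrac12\dim_\R M$, and then invokes Eliashberg's theorem to endow $M$ with a Stein structure; the zero section provides the homotopy equivalence. You instead embed $Z$ properly in $\R^N\subset\C^N$ as a totally real submanifold and appeal to the classical theorem that totally real submanifolds admit a basis of Stein tubular neighborhoods retracting onto them --- this is precisely the ``known consequence of the classical characterisation of Stein spaces by H. Grauert'' that the authors mention (citing Forstneric, Corollary 3.5.3) before choosing the Eliashberg shortcut. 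Your route avoids a deep theorem, imposes no dimension restriction, and realizes $M$ concretely as an open subset of $\C^N$; the paper's route is shorter modulo Eliashberg and yields the more canonical model $T^*Z$. Two small points to tighten: the squared distance to $\R^N$ in $\C^N$ is $\sum_j(\mathrm{Im}\,z_j)^2$, which is in fact \emph{strictly} plurisubharmonic everywhere (its Levi form is $\tfrac12\sum_j dz_j\,d\bar z_j$), so the obstacle is not degeneracy but that its minimum locus is all of $\R^N$ rather than $Z$ --- which is exactly why you need the strict plurisubharmonicity of $\mathrm{dist}(\cdot,Z)^2$ near the totally real $Z$, i.e.\ the tubular neighborhood theorem you cite; and for non-compact $Z$ the tube radius must be allowed to vary and one must pass to a genuine exhaustion (e.g.\ $-\log(\epsilon-\varphi)+|z|^2$), which the standard references handle.
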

\begin{proof} This is a known consequence of the classical characterisation of Stein spaces by H. Grauert (see \cite{F}, corollary 3.5.3, and the references there, for example). We give here a short proof, using a deep theorem of Eliashberg.

If $Z$ is a $CW$-complex, we embedd into some $\R^n$. Then $Z$ is
homotopic to some open neighborhood of $Z$ in $\R^n$. Since open subsets
of $\R^n$ are manifolds, it thus suffices to deal with the case
where $Z$ is a differentiable manifold. By taking a direct product
with some $\R^k$, we may furthermore assume that $\dim_{\R}(Z)>2$.
Let $M=T^*Z\stackrel{\tau}\mapsto Z$ denote the cotangent bundle.
Then $M$ carries a symplectic structure in a natural way and therefore
admits an almost complex structure.
Fixing a metric $h$ on $M=T^*Z$ and choosing an exhaustive Morse function
$\rho$ on $Z$, we can use $p(v)=\rho(\tau(v))+h(v)$
as an exhaustive Morse function on $M$.
By construction the critical points of $p$ are all in the zero-section
of the cotangent bundle of $Z$ and coincide with the critical points
of $\rho$. Therefore there is no critical point of index 
larger than $\dim(Z)=\frac12\dim M$.
By a result of Eliashberg (\cite{E}) it follows from the existence of such a Morse
function and the existence of an almost complex structure 
that $M$ can be endowed with
the structure of Stein complex manifold.
This completes the proof since $M$ is obviously homotopy equivalent
to $Z$.
\end{proof}

\begin{theorem}\label{thpcc}
Let $X$ be a complex space which fulfills the $h$-principle. Then $X$ is `weakly $\Bbb C$-connected'.
\end{theorem}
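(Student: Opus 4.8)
The plan is to verify the defining property of weak $\C$-connectedness directly. I fix an unramified covering $X'\to X$, a holomorphic map $f:X'\to Y$ with $Y$ Brody-hyperbolic, an integer $k>0$, and a class $\alpha\in\pi_k(X')$, and I aim to show $\pi_k(f)(\alpha)=0$. The first step is to reduce everything to $X'$ itself: by Lemma~\ref{et} the $h$-principle passes from $X$ to any unramified cover, so $X'$ again satisfies hP. I then represent $\alpha$ by a continuous map $g:S^k\to X'$.

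The key geometric input is Proposition~\ref{stein-sphere}: the affine quadric $Q_k(\C)$ is Stein, is homotopy equivalent to $S^k$ via some $e:Q_k\to S^k$, and has any two of its points joined by an algebraic $\C^*$. I would form the continuous map $g\circ e:Q_k\to X'$ and invoke hP($X'$) --- legitimate precisely because $Q_k$ is a Stein manifold --- to obtain a holomorphic map $G:Q_k\to X'$ homotopic to $g\circ e$. The composite $f\circ G:Q_k\to Y$ is then holomorphic.

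Next I would show that $f\circ G$ is constant. Given two points $p,q\in Q_k$, choose an algebraic map $\sigma:\C^*\to Q_k$ whose image contains both. Precomposing with the exponential $\exp:\C\to\C^*$ yields a holomorphic map $(f\circ G)\circ\sigma\circ\exp:\C\to Y$, which is constant since $Y$ is Brody-hyperbolic; as $\exp$ and $\sigma$ are surjective onto their images, $f\circ G$ takes the same value at $p$ and $q$. Since $p,q$ are arbitrary, $f\circ G$ is constant (the degenerate cases where the connecting curve is a $\C$ or a $\C^2$ are handled the same way, restricting to an affine line through $p$ and $q$). I regard this as the conceptual crux of the argument: although $Q_k$ is not itself Brody-hyperbolic, and for $k=1$ is literally the hyperbolic curve $\C^*$, precomposition with $\exp$ lets one exploit Brody-hyperbolicity of the \emph{target} uniformly in $k$, circumventing the obstruction that would otherwise appear at $k=1$.

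The remaining step is homotopy bookkeeping. Since $f\circ G\simeq f\circ g\circ e$ and $f\circ G$ is constant, $f\circ g\circ e$ is null-homotopic; because $e$ is a homotopy equivalence, precomposition $e^*:[S^k,Y]\to[Q_k,Y]$ is a bijection, whence $f\circ g$ is freely null-homotopic. As the free homotopy classes of maps $S^k\to Y$ are the orbits of $\pi_k(Y)$ under the $\pi_1(Y)$-action (conjugacy classes when $k=1$), and the orbit of the trivial element is a singleton, we conclude $f_*(\alpha)=\pi_k(f)(\alpha)=0$. Running this for every $k>0$, every $f$, and every unramified cover $X'$ yields weak $\C$-connectedness. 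Apart from the crux in the previous paragraph, the only point requiring care is this passage from free to based null-homotopy, which I expect to be routine.
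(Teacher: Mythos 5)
Your argument is correct and follows essentially the same route as the paper's: reduce to $X'$ via Lemma~\ref{et}, replace $S^k$ by the Stein quadric $Q_k$ of Proposition~\ref{stein-sphere}, apply the $h$-principle to get a holomorphic representative, and use Brody-hyperbolicity of $Y$ to force constancy. You merely spell out two points the paper leaves implicit --- the $\exp$-precomposition showing holomorphic maps $Q_k\to Y$ are constant, and the passage from free to based null-homotopy --- both of which are handled correctly.
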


\begin{proof}
Assume not. Since $hP(X)$ is preserved by passing to unramified coverings
(see lemma~\ref{et}), we may assume that $X'=X$ in definition \ref{dcc}(3). Then  there exists a holomorphic map $g:X\to Y$, with $Y$ Brody-hyperbolic, and such that there exists a non-zero induced homotopy map $\pi_k(g):\pi_k(X)\to \pi_k(Y), k>0$.
Let $f:S^k\to X$ be a continuous map defining a non-trivial
element of $g\circ f:S^k\to \pi_k(Y)$, where $S^k:=$ the $k$-dimensional sphere.
Let $Q_k$ be the $k$-dimensional affine quadric,
and a continuous map $\varphi:Q_k\to S^k$ which is a homotopy equivalence
(its existence is due to proposition~\ref{stein-sphere}).
Then $f\circ\varphi:Q_k\to Y$ is a continuous map which is not homotopic
to a constant map. But due to the Brody-hyperbolicity of $Y$, 
every holomorphic map from $Q_k$ to $Y$ must be constant, contradicting our initial assumption.
\end{proof}

Applying the preceding result to $Y:=X$, we get:

\begin{corollary}\label{bhnhp}
Let $X$ be a Brody-hyperbolic complex manifold.

Then $X$ fulfills the $h$-principle if and only if it is contractible.
\end{corollary}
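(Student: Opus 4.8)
The plan is to prove Corollary~\ref{bhnhp} as a direct application of Theorem~\ref{thpcc}, together with the elementary fact that a Brody-hyperbolic complex manifold cannot be weakly $\Bbb C$-connected unless it is contractible. First I would dispose of the easy implication: if $X$ is contractible, then any continuous map $f:S\to X$ from a Stein manifold $S$ is homotopic to a constant map, which is trivially holomorphic, so $hP(X)$ holds. This direction uses nothing about Brody-hyperbolicity and is noted already in the remark following Definition~\ref{dip}.

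For the substantive direction, suppose $X$ is Brody-hyperbolic and satisfies the $h$-principle. By Theorem~\ref{thpcc}, $X$ is weakly $\Bbb C$-connected. I would now apply the definition of weak $\Bbb C$-connectedness in Definition~\ref{dcc}(3) to the \emph{identity} map, taking $X'=X$ and $Y=X$ and $f=id_X$. The map $id_X:X\to X$ is holomorphic, and $X$ itself is Brody-hyperbolic by hypothesis, so $Y=X$ is an admissible target. Weak $\Bbb C$-connectedness then forces the induced maps $\pi_k(id_X):\pi_k(X)\to\pi_k(X)$ to vanish for all $k>0$. But $\pi_k(id_X)$ is the identity homomorphism on $\pi_k(X)$, so this says $\pi_k(X)=0$ for every $k>0$.

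It remains to upgrade the vanishing of all higher homotopy groups to contractibility. Since $X$ is a complex manifold it has the homotopy type of a $CW$-complex, and a connected $CW$-complex all of whose homotopy groups $\pi_k$, $k\ge 1$, vanish is contractible by the Whitehead theorem (the map from $X$ to a point is a weak homotopy equivalence, hence a genuine homotopy equivalence for $CW$-complexes). One subtlety to address is $\pi_1$: a priori Definition~\ref{dcc}(3) concerns $\pi_k$ for $k>0$, which includes $k=1$, so the fundamental group is also killed and no separate argument is needed; I would simply remark that the $k=1$ case of the vanishing gives $\pi_1(X)=0$, so $X$ is simply connected and the Whitehead argument applies cleanly.

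The main obstacle, such as it is, lies entirely in the legitimacy of the self-application $Y=X$ in the definition of weak $\Bbb C$-connectedness, which is what makes Brody-hyperbolicity of the \emph{source} do double duty as Brody-hyperbolicity of an admissible \emph{target}. Everything else is formal: Theorem~\ref{thpcc} supplies weak $\Bbb C$-connectedness, self-application kills all homotopy groups, and Whitehead's theorem then yields contractibility. I would take care only to note that $X$ is assumed connected (implicit in writing $\pi_k$) and has the homotopy type of a $CW$-complex, so that Whitehead applies.
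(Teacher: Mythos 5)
Your proposal is correct and matches the paper's argument: the paper derives Corollary~\ref{bhnhp} in one line by ``applying the preceding result [Theorem~\ref{thpcc}] to $Y:=X$,'' which is exactly your self-application of weak $\Bbb C$-connectedness to the identity map, followed by Whitehead's theorem. You have merely spelled out the details (the trivial converse direction, the vanishing of all $\pi_k$, the $CW$-structure) that the paper leaves implicit.
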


\begin{corollary}
Let $X$ be a positive-dimensional compact complex Brody-hyperbolic manifold.
Then $X$ does not fulfill the $h$-principle.
\end{corollary}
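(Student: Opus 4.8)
The plan is to deduce this statement directly from Corollary~\ref{bhnhp}, which already characterizes, among Brody-hyperbolic complex manifolds, those fulfilling the $h$-principle as exactly the contractible ones. Thus it suffices to observe that a positive-dimensional compact complex manifold can never be contractible, and then to apply that corollary.

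First I would recall that a complex manifold carries a canonical orientation induced by its complex structure, as already used in the orientation discussion above. Hence a compact complex manifold $X$ of complex dimension $n\geq 1$ is a closed oriented real manifold of dimension $2n\geq 2$. For such a manifold the fundamental class yields $H_{2n}(X;\Z)\cong\Z$, which is non-trivial. Since a contractible space has vanishing reduced homology in every positive degree, $X$ cannot be contractible. (Note that contractibility would in any case force connectedness, so there is no ambiguity coming from a possibly disconnected $X$.)

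Combining these two points gives the result: by Corollary~\ref{bhnhp}, if the Brody-hyperbolic manifold $X$ fulfilled the $h$-principle, it would be contractible, contradicting the non-contractibility just established. Therefore $X$ does not fulfill the $h$-principle.

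There is no genuine obstacle here; the statement is a formal consequence of the preceding corollary together with the elementary fact that closed oriented manifolds of positive dimension have non-trivial top homology. The only point worth flagging is the use of the canonical complex orientation to guarantee that $X$ is orientable, without which the top $\Z$-homology argument would have to be replaced by a version with $\Z/2$-coefficients; for complex manifolds this orientability is automatic, so the argument goes through as stated.
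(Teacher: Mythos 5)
Your proposal is correct and follows exactly the paper's route: apply Corollary~\ref{bhnhp} and note that a positive-dimensional compact (complex, hence oriented) manifold has nonzero top homology and so cannot be contractible. The paper states this non-contractibility in one line; you merely supply the standard justification, so there is nothing further to add.
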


\begin{proof}
Positive-dimensional compact manifolds are not contractible
\end{proof}

In particular, compact Riemann surfaces of genus $g\ge 2$ do not fulfill
the $h$-principle.

\begin{remark}
There exist holomorphic maps $f:X\to Y$
with $X$ and $Y$ both smooth and projective
which are not homotopic to a constant map, although
$\pi_k(f)=0$  for all $k>0$. 
For example, take a compact Riemann surface $X$ of genus $g\ge 2$ and let
$f$ be any non-constant map to $\Bbb P^1$ (example suggested by F. Bogomolov). 

Therefore it is not clear whether the property ``weakly $\C$-connected''
implies that every holomorphic map to a Brody-hyperbolic complex
space must be homotopic to a constant map.

The following theorem \ref{tphpwcc} solves this issue in the projective case, assuming the $h$-principle.
\end{remark}

\subsection{Projective Brody-hyperbolic quotients}

\begin{theorem}\label{tphpwcc} Let $X$ be an irreducible projective complex space satisfying the $h$-principle. 
Let $f:X\dasharrow Y$ be a meromorphic map
to a Brody hyperbolic K\"ahler manifold $Y$.

Assume that $f$ is holomorphic or that $X$ is smooth.

Then $f$ is constant.
\end{theorem}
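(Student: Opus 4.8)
The plan is to combine Jouanoulou's trick (Proposition \ref{jtrick}) with the conjugate complex structure and the sign computation of Lemma \ref{int}, using the $h$-principle to manufacture a genuinely holomorphic map out of the conjugate of $X$. I first treat the clean case where $X$ is smooth and $f$ is holomorphic, and argue by contradiction, assuming $f$ non-constant. Apply Proposition \ref{jtrick} to get a smooth affine variety $M$ and a surjective $\tau:M\to X$ which is a homotopy equivalence and a locally holomorphically trivial bundle with fibres $\cong\C^k$. Since $M$ is affine, its conjugate $\overline{M}$ is again affine, hence Stein, and carries the conjugate bundle map $\overline\tau:\overline{M}\to\overline{X}$ with the same properties. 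Let $r_M:\overline{M}\to M$ and $r_X:\overline{X}\to X$ denote the identity maps of the underlying real manifolds; by construction $\tau\circ r_M=r_X\circ\overline\tau$. Applying $hP(X)$ to the Stein manifold $\overline{M}$ and the continuous map $\tau\circ r_M:\overline{M}\to X$ yields a holomorphic map $\sigma:\overline{M}\to X$ homotopic to $\tau\circ r_M$.

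Next I would descend $f\circ\sigma$ to a holomorphic map on $\overline{X}$. Since $Y$ is Brody-hyperbolic and each fibre of $\overline\tau$ is biholomorphic to $\C^k$, the holomorphic map $f\circ\sigma:\overline{M}\to Y$ is constant on the fibres of $\overline\tau$; using the local holomorphic triviality of $\overline\tau$ this descent is holomorphic, producing $\psi:\overline{X}\to Y$ with $\psi\circ\overline\tau=f\circ\sigma$. Tracking homotopy classes, $\psi\circ\overline\tau=f\circ\sigma\simeq f\circ\tau\circ r_M=f\circ r_X\circ\overline\tau$, and since $\overline\tau$ is a homotopy equivalence this forces $\psi\simeq f\circ r_X=:\varphi$. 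Thus $\psi$ lies in exactly the homotopy class of the map $\varphi=f\circ\zeta$ of Lemma \ref{int} for $\zeta:=r_X$, which is homotopic to $id_X$.

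The contradiction is then purely cohomological. I apply Lemma \ref{int} with $d=1$, $c=f$ and $\alpha=\omega_Y$ a K\"ahler form on $Y$, obtaining $\int_{\overline{X}}\omega_{\overline{X}}^{n-1}\wedge\varphi^*\omega_Y=-\int_X\omega_X^{n-1}\wedge f^*\omega_Y$. Because $\psi\simeq\varphi$, the left-hand integral depends only on the cohomology classes and equals $\int_{\overline{X}}\omega_{\overline{X}}^{n-1}\wedge\psi^*\omega_Y$, which is $\geq 0$ since $\psi$ is holomorphic and $\omega_{\overline{X}},\omega_Y$ are K\"ahler forms. On the other hand, $f$ non-constant forces $\int_X\omega_X^{n-1}\wedge f^*\omega_Y>0$ (at a point where $f$ has positive rank the integrand is strictly positive), so the right-hand side is $<0$. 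This contradiction shows $f$ is constant.

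Finally, I expect the real work to lie in the passage beyond this clean case. For meromorphic $f$ with $X$ smooth, $f\circ\sigma$ is only meromorphic, so ``constant on fibres'' must instead be deduced from generic lines $\C\hookrightarrow\C^k$ (each giving an entire curve in $Y$, hence constant), and the holomorphic descent of a fibrewise-constant meromorphic map along the affine bundle must then be justified. The singular case ($f$ holomorphic, $X$ an irreducible complex space) is more delicate still: $\overline{M}$ is then a singular Stein space, so one needs the $h$-principle for maps from singular Stein spaces, and Lemma \ref{int} is stated for manifolds, so the final integral step must be transported to a resolution while preserving the homotopy bookkeeping of the previous paragraphs.
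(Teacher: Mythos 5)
Your argument in the case ``$X$ smooth, $f$ holomorphic'' is correct, and it is essentially the strategy the paper uses for Theorem \ref{hps} (specialness), transplanted to this setting: Jouanoulou over the whole of $X$, conjugation, the $h$-principle on the Stein manifold $\overline{M}$, descent along the $\C^k$-fibres via Brody hyperbolicity, and the sign flip of Lemma \ref{int} with $d=1$. The paper's actual proof of Theorem \ref{tphpwcc} takes a cheaper route: since $X$ is projective and $f$ is assumed non-constant, it picks a compact curve $C\subset X$ with $f|_C$ non-constant, puts the affine $\C$-bundle $E\to\bar C$ over the \emph{conjugate curve}, applies $hP(X)$ to the Stein surface $E$ with the map $j\circ\tau$, descends $f\circ h$ to a holomorphic $\varphi:\bar C\to Y$ homotopic to the antiholomorphic $f\circ j$, and contradicts $0<\int_{\bar C}\varphi^*\omega=\int_{\bar C}(f\circ j)^*\omega<0$. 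The curve reduction buys three things your global approach does not: the final integral is a degree-$2$ integral on a Riemann surface (no need for Lemma \ref{int} in full generality), the descent along fibres is immediate, and --- crucially --- the singular case of the statement comes for free, since one can replace $C$ by its normalization and still land in $X$.

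The gaps you flag at the end are real, and they matter because the theorem is stated for irreducible projective complex \emph{spaces} and for meromorphic $f$. For the singular case your construction produces a singular Stein space $\overline{M}$, whereas Definition \ref{dip} only posits the $h$-principle for Stein \emph{manifolds}, and Lemma \ref{int} requires a manifold to integrate over; your approach therefore does not cover that case without further argument, while the curve trick sidesteps it entirely. For the meromorphic case you propose to push the meromorphic $f\circ\sigma$ through the fibrewise argument, which is harder than necessary: the paper observes that when $X$ is smooth one may resolve the indeterminacy of $f$ by blow-ups along smooth centres, the fibres of $\hat X\to X$ are then rational, hence mapped to points of the Brody-hyperbolic $Y$, so $f$ was already holomorphic --- a one-line reduction that eliminates the meromorphic case before the bundle argument begins. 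With that reduction in hand, and the curve reduction for singular $X$, your core computation does the rest.
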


\begin{proof} 
For every meromorphic map $f:X\dasharrow Y$ there exists a 
proper modification
$\hat X\to X$ such that $f$ can be lifted to a holomorphic map defined
on $\hat X$. If $X$ is smooth, this modification can be obtained by
blowing-up smooth centers, implying that the fibers of $\hat X\to X$
are rational. Since $Y$ is Brody-hyperbolic, holomorphic maps
from rational varieties to $Y$ are constant.

Hence $X$ being smooth implies that $f$ is already holomorphic.

Thus in any case, we may assume that $f$ is holomorphic.

Because $X$ is projective, we may find a compact complex curve $C$ on $X$
such that $f|_C$ is non-constant.

Let $\bar C$ be $C$ equipped with its 
conjugate (ie: opposite) complex structure, and 
$j:\bar C\to C$ be the set-theoretic identity map. 
Let $\tau:E\to \bar C$ be an holomorphic affine $\C$-bundle
as given by proposition~\ref{jtrick}.

Since $X$ is assumed to fulfill the
$h$-principle, the continuous map 
$j\circ \tau: E\to X$ 
is homotopic to a holomorphic map $h:E\to X$. 
Because $Y$ is Brody hyperbolic, the map $f\circ h:E\to Y$ is constant 
along the fibres of $\tau$.
Hence $f\circ h$ is equal to $\varphi\circ \tau$ for a holomorphic map 
$\varphi: \bar C\to Y$.
Observe that $\varphi,f\circ j : \bar C\to Y$ are homotopic too each
other, but the first map is holomorphic while the latter is
antiholomorphic. This is a contradiction, because now
\[
0 < \int_{\bar C}\varphi^*\omega = 
\int_{\bar C}(f\circ j)^*\omega < 0
\]
for any K\"ahler form $\omega$ on $Y$.
\end{proof}

\section{$h$-principle implies specialness for projective manifolds}
\begin{theorem}\label{hps}
Let $X$ be a complex projective manifold.

If $X$ fulfills the $h$-principle, then $X$ is special
in the sense of \cite{C01}.
\end{theorem}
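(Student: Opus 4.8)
The strategy is to combine Jouanoulou's trick (Proposition~\ref{jtrick}) with the factorisation theorem for the core map (Theorem~\ref{ftcm}), and then derive a contradiction from the opposite-complex-structure integral computation (Corollary~\ref{cint}) whenever $X$ is assumed special yet has a nontrivial core.

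Suppose, for contradiction, that $X$ is not special, so its core map $c_X:X\dasharrow C(X)$ has a positive-dimensional general-type orbifold base, i.e.\ $\dim C(X)>0$. First I would apply Proposition~\ref{jtrick} to obtain a smooth affine variety $M$ together with a surjective morphism $\tau:M\to X$ which is a homotopy equivalence and whose fibres are all isomorphic to $\mathbb C^k$. Because $\tau$ is a homotopy equivalence and $X$ satisfies the $h$-principle, the tautological idea is to transport the $h$-principle to produce a holomorphic map $M\to X$ homotopic to (a section-composite of) $\tau$; more precisely, one treats $M$ as a Stein manifold and uses $hP(X)$ to deform the continuous map $M\to X$ (built from $\tau$ and a continuous section, or directly from the inclusion of data produced by the trick) to a holomorphic map $G:M\dasharrow X$ that is still homotopic to $\tau$. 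The key point is that $G$ retains enough nondegeneracy that $g:=c_X\circ G:M\to C(X)$ is non-degenerate (submersive somewhere): this is where I expect to lean on the homotopy equivalence $\tau$ together with the assumption $\dim C(X)>0$, arguing that if $g$ were everywhere degenerate then $c_X$ itself would factor through a lower-dimensional base, contradicting the minimality/definition of the core.

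Granting non-degeneracy of $g$, Theorem~\ref{ftcm} applies with $Z:=X$ (using that the fibres $\mathbb C^k$ of $\tau$ are rationally connected, so $c_{\bar\tau}$ is an isomorphism): it yields a factorisation $g=\varphi\circ c_X\circ\tau$ through the core map of the target $X$. The upshot is a non-degenerate meromorphic map $\varphi:C(X)\dasharrow C(X)$ compatible with the homotopy data, or—more to the point—a meromorphic map from $M$, hence (via the homotopy equivalence $\tau$ and Lemma~\ref{int}/Corollary~\ref{cint}) a comparison between integrals over $X$ and over its conjugate $\overline X$. Concretely, I would run the opposite-complex-structure argument of Corollary~\ref{cint}: the holomorphic map $c_X:X\dasharrow C(X)$ composed with a self-homotopy-equivalence coming from the $\overline X$-identification gives on one side a strictly positive integral $\int_X \omega_X^{\,n-1}\wedge c_X^*\omega_{C(X)}>0$ and on the other side, by the sign $(-1)^d$ with $d=1$, the negative of it, which is the desired contradiction.

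\textbf{The main obstacle.}
The delicate step is \emph{not} the integral sign-chase (that is packaged in Corollary~\ref{cint}) but rather producing, from $hP(X)$ and Jouanoulou's trick, a holomorphic map $G:M\dasharrow X$ for which $c_X\circ G$ is genuinely \emph{non-degenerate}. The $h$-principle only guarantees a holomorphic map homotopic to a given continuous one; homotopy alone does not preserve the differential-geometric rank of $c_X\circ G$ at any particular point, and an a priori homotopic holomorphic map could a priori collapse onto a hyperbolic-type target. I expect the resolution to route through Theorem~\ref{tphpwcc} (projective Brody-hyperbolic quotients are trivial under $hP$) applied to the general-type orbifold base of the core, together with the orbifold Kobayashi--Ochiai extension (Theorem~\ref{koo}): one first compactifies $M$ to $\overline M$ by a normal-crossing divisor $D$, argues that $G$ extends and that $c_X\circ G$ cannot be degenerate without forcing $c_X$ to have a point base (i.e.\ $X$ special after all), and only then invokes Theorem~\ref{ftcm}. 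Verifying that the fibres of $\tau$, being $\mathbb C^k$ and thus having vanishing Kobayashi pseudodistance and being rationally connected, do not obstruct the extension and force surjectivity of the components of $D$ onto $C(X)$ is the technical heart of the argument.
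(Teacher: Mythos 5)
Your overall architecture (Jouanoulou + Theorem~\ref{ftcm} + the conjugate-structure integral) matches the paper's, but the way you set it up contains a gap that would make the argument collapse. You apply Proposition~\ref{jtrick} to $X$ itself and then invoke $hP(X)$ to deform ``the continuous map $M\to X$ built from $\tau$'' to a holomorphic map homotopic to $\tau$. But that $\tau:M\to X$ is \emph{already holomorphic}, so the $h$-principle produces nothing new: $c_X\circ\tau$ is of course non-degenerate when $X$ is non-special, and no contradiction arises. The essential move in the paper is to run Jouanoulou's trick over the \emph{conjugate} manifold $\overline X$ (which is again projective), obtaining $\tau:M\to\overline X$, and to apply $hP(X)$ to the continuous but non-holomorphic map $id_X\circ\tau:M\to X$. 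The resulting holomorphic $h:M\to X$, composed with a smooth section $\sigma:\overline X\to M$, gives $\zeta=h\circ\sigma:\overline X\to X$ homotopic to $id_X$; Theorem~\ref{ftcm} (applied with $Z=\overline X$) then forces $c_X\circ\zeta$ to be \emph{meromorphic} on $\overline X$, and Corollary~\ref{cint} says precisely that a non-degenerate meromorphic $c_X\circ\zeta$ with $\zeta$ homotopic to $id_X$ is impossible, because the two K\"ahler integrals it computes would have opposite signs. You do gesture at this sign-flip at the end, but the conjugate must enter at the start, in the choice of the base of the Jouanoulou bundle; otherwise there is no tension between holomorphy and homotopy class.

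Your treatment of the non-degeneracy of $g=c_X\circ G$ is also off. You propose to derive it from ``minimality of the core'' or from Theorem~\ref{tphpwcc} plus Kobayashi--Ochiai; neither works (the orbifold base of the core need not be Brody hyperbolic, and degeneracy of $c_X\circ G$ for one particular $G$ homotopic to $\tau$ says nothing about $c_X$ factoring). The paper's argument is purely cohomological and is exactly what Lemma~\ref{int} is for: since $\zeta$ is homotopic to $id_X$ and $I=\int_X\omega_X^{n-d}\wedge c_X^*(\omega_C^d)>0$, homotopy invariance gives $\int_{\overline X}\zeta^*(\omega_X^{n-d}\wedge c_X^*(\omega_C^d))=(-1)^dI\neq 0$, hence $(c_X\circ\zeta)^*(\omega_C^d)\not\equiv 0$, and Sard's theorem yields non-degeneracy of $c_X\circ\zeta$ and therefore of $c_X\circ h$. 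This is the step you flagged as the ``main obstacle,'' and it is resolved by the integral computation, not by hyperbolicity considerations.
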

\begin{proof}
Let $\bar X$ denote the underlying real manifold equipped
with the opposite complex structure and let $id_X:
\bar X\to X$ denote the antiholomorphic diffeomorphism
induced by the identity map of this underlying real manifold.

Recall that $\bar X$ is projective, too.
Hence we can find a Stein manifold $M$ together with a holomorphic
fiber bundle $\tau:M\to\bar X$ with some $\C^k$ as fiber
(proposition~\ref{jtrick}).

Let $\sigma:\bar X\to M$ denote a smooth (real-analytic, for example) section
 (whose existence is guaranteed by proposition~\ref{jtrick}).

Since we assumed that $X$ fulfills the $h$-principle, there must
exist a holomorphic map $h:M\to X$ homotopic to $id_X\circ\tau$.

Define $\zeta:=h\circ \sigma: \bar X\to X$. Thus: $\zeta$ is homotopic to $id_X$.

Let $c:X\dasharrow C$ be the core map of $X$. We assume that $X$ is not special, i.e., that $d:=\dim(C)>0$. Let also: $n=\dim X$.

We claim that $c\circ \zeta:\bar X\dasharrow C$ is non-degenerate, and thus, that so is $g:=c\circ h:M\to C(X)$.

Let indeed, $\omega_C$ (resp. $\omega_X)$ be a K\" ahler form on $C$ (resp. on $X$), and let $d:= dim(C)$. Then $I:=\int_X\omega_X^{n-d}\wedge c^*(\omega_C^d)>0$. By lemma \ref{int}, we have: $I':=\int_{\bar X} \zeta^*(\omega_X^{n-d}\wedge c^*(\omega_C^d))=(-1)^d.I\neq 0$. This implies that $(c\circ\zeta)^*(\omega_C^d)\neq 0$, and so that $c\circ \zeta$ is not of measure zero. By Sard's theorem, this implies that $c\circ \zeta$ is non-degenerate, and so is thus $c\circ h$.

We consider the meromorphic map $c\circ h:=g:M\to C$.
By theorem \ref{ftcm},  it follows that we obtain an induced meromorphic map
$\varphi:\bar X\to C$ such that $\varphi\circ\tau= g$, and thus such that: $\varphi=\varphi\circ \tau\circ \sigma=c\circ h\circ \sigma=c\circ \zeta$.

We consider now the integral:
$J=\int_X \omega_X^{n-1}\wedge c_X^*(\omega_C)$. Thus $J>0$.

From corollary \ref{cint} we get a contradiction.

Hence $X$ cannot fulfill the $h$-principle, unless $\dim(C)=0$,
i.e. unless $X$ is special.
\end{proof}

A consequence of theorem \ref{hps} and conjecture \ref{cj} is the following homotopy restriction for the $h$-principle to hold:

\begin{conjecture}\label{ab} If $X$ is complex projective manifold satisfying the $h$-principle, then $\pi_1(X)$ is almost abelian.
\end{conjecture}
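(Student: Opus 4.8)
The plan is to obtain the statement as an immediate consequence of Theorem~\ref{hps} together with the first part of Conjecture~\ref{cj}, the so-called Abelianity conjecture. First I would invoke Theorem~\ref{hps}: since $X$ is a complex projective manifold fulfilling the $h$-principle, it is special in the sense of \cite{C01}. Note that a projective manifold is in particular a connected compact K\"ahler manifold, so the hypotheses under which specialness and the core map $c_X$ are defined in \S\ref{Sp} are all met, and Theorem~\ref{hps} applies verbatim.

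Next I would feed this conclusion into Conjecture~\ref{cj}(1), which asserts that the fundamental group of any special (connected compact K\"ahler) manifold is almost abelian, i.e.~admits an abelian subgroup of finite index. Chaining the two implications gives directly that $\pi_1(X)$ is almost abelian, which is exactly the asserted homotopy restriction. The entire argument is thus a two-step reduction: the $h$-principle forces specialness (Theorem~\ref{hps}), and specialness is conjectured to force an almost abelian fundamental group (Conjecture~\ref{cj}(1)).

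The main obstacle is that the second step rests on Conjecture~\ref{cj}(1), which is open in general; this is precisely why the present statement is recorded as a conjecture rather than a theorem, and the genuine content established here is the \emph{reduction} of a purely topological obstruction for the $h$-principle to the Abelianity conjecture via specialness. In cases where the Abelianity conjecture is already known the implication becomes unconditional. For surfaces, for instance, specialness is by Remark~\ref{rspec}(8) equivalent to ``$\kappa<2$ jointly with $\pi_1(X)$ almost abelian'', so the almost abelianity of $\pi_1(X)$ is built into specialness and the conclusion follows outright: every projective surface satisfying the $h$-principle has almost abelian fundamental group. The hard part in the general higher-dimensional situation is therefore entirely inherited from the Abelianity conjecture and lies outside the reach of the $h$-principle machinery developed in this paper; the contribution here is to exhibit this group-theoretic constraint as a \emph{necessary} consequence of the $h$-principle, conditional only on \cite{C01}.
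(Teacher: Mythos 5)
Your derivation is exactly the paper's: Theorem~\ref{hps} gives specialness, and Conjecture~\ref{cj}(1) (the Abelianity conjecture) then yields the almost abelian fundamental group, the conditionality of the second step being precisely why the statement is recorded as a conjecture. Your added observations (the unconditional surface case via Remark~\ref{rspec}(8), and the paper's own note that the linear or solvable cases are known by \cite{C11} and \cite{C10}) are consistent with the text.
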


Notice that this conjecture is true if $\pi_1(X)$ has a faithfull linear representation in some $Gl(N,\Bbb C)$, or is solvable, by \cite{C11}, and 
\cite{C10} respectively. 

The above result on projective manifolds rises the following questions.

\begin{question} 
\begin{enumerate}
\item
 Are compact K\"ahler manifolds satisfying the $h$-principle special? This is true, at least, for compact K\"ahler surfaces (see proposition \ref{hpws} and its corollary below).
\item
Let $X$ be a quasi-projective manifold satisfying the $h$-principle.
Assume that $X$ is not homopy-equivalent to any proper subvariety $Z\subset X$.
Does it follow that $X$ is special?
\end{enumerate}
\end{question}

We have some partial results towards answering these questions.

\begin{theorem}\label{hpws} Let $X$ be a compact K\"ahler manifold satisfying the $h$-principle. Then the Albanese map of $X$ is surjective.
\end{theorem}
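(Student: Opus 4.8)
The plan is to prove that the Albanese map $a_X : X \to \mathrm{Alb}(X)$ is surjective by contradiction, following the same mechanism used in the proof of Theorem~\ref{hps}, but now exploiting the fact that the Albanese is a map to a complex torus, which is Brody-hyperbolic-like in the relevant quantitative sense even though it is not itself hyperbolic. If $a_X$ is not surjective, its image $W := a_X(X)$ is a proper complex analytic subvariety of the torus $A := \mathrm{Alb}(X)$. The first step is to recall the structure of subvarieties of complex tori: by Ueno's theorem, after passing to the quotient torus $A \to A/A_0$ that kills the largest subtorus $A_0$ stabilizing $W$, the image of $W$ generates $A/A_0$ and is of general type (its orbifold/Iitaka fibration base is positive-dimensional) precisely when $W$ is a proper subvariety not a translate of a subtorus. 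So the key reduction is to produce from the failure of surjectivity a \emph{dominant} map $c : X \dashrightarrow V$ onto a positive-dimensional compact K\"ahler manifold $V$ that plays the role of the core map target and carries a K\"ahler form $\omega_V$ whose pullback is nonzero, while simultaneously being rigid enough to trigger the opposite-complex-structure contradiction of Corollary~\ref{cint}.

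The main technical route I would take is to avoid needing the full core map and instead argue directly on the Albanese torus. Set $n = \dim X$ and suppose $a_X$ is not surjective. As in Theorem~\ref{hps}, pass to the opposite complex manifold $\bar X$, which is again compact K\"ahler (and here the Albanese of $\bar X$ is the conjugate torus $\bar A$, with $a_{\bar X} = \overline{a_X}$ being the conjugate of the Albanese map). Since the paper works in the projective case for Jouanoulou's trick, the honest obstacle is that Proposition~\ref{jtrick} is stated for projective $X$ only; so the first thing to check is whether the K\"ahler hypothesis suffices to run the homotopy argument. The workaround I would adopt is to note that the \emph{Albanese image} $W$ and the relevant integrals are cohomological, and that the existence of the section $\sigma : \bar X \to M$ and homotopic holomorphic map $h : M \to X$ is what drives everything. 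If Jouanoulou is unavailable one must either restrict the claim to the cases where a suitable Stein affine bundle exists (the class $G$ of the remark after Proposition~\ref{jtrick}, which contains quasi-projective manifolds), or invoke that compact K\"ahler manifolds admit a bimeromorphic projective model when relevant — here I would instead isolate exactly the integral that forces surjectivity.

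Concretely, I would run the following computation. If $a_X$ is not surjective then $\dim W < \dim A =: q$, hence there exists a nonzero holomorphic $1$-form, equivalently a class $\alpha \in H^{1,0}(A)$, such that the pulled-back holomorphic forms $a_X^* H^0(A, \Omega_A^1)$ satisfy a nontrivial algebraic relation witnessing $\dim W < q$; more usefully, there is some $k \le \dim W$ and a K\"ahler form $\omega_W$ on a smooth model $\tilde W$ of $W$ with $\int_X \omega_X^{n-k} \wedge c^*(\omega_W^{\,k}) > 0$ where $c : X \dashrightarrow \tilde W$ is the Albanese map followed by the Stein factorization onto its image. The contradiction then proceeds exactly as in Theorem~\ref{hps} and Corollary~\ref{cint}: using $h$-principle to realize $id_X \circ \tau$ by a holomorphic $h : M \to X$, setting $\zeta = h \circ \sigma : \bar X \to X$ homotopic to $id_X$, Lemma~\ref{int} gives $\int_{\bar X} \zeta^*(\omega_X^{n-k} \wedge c^*\omega_W^{\,k}) = (-1)^k I \ne 0$, so $c \circ \zeta$ is non-degenerate by Sard; but $c \circ \zeta : \bar X \dashrightarrow \tilde W$ would then be a non-degenerate meromorphic map agreeing up to homotopy with the antiholomorphic $c \circ j$, and applying Corollary~\ref{cint} with $d = 1$ and the K\"ahler form $\omega_W$ yields $I' = -I$, a contradiction.

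The hard part, and where I expect the real obstacle to lie, is twofold. First, one must verify that the target $\tilde W$ of the factored Albanese map is genuinely of \emph{general type} orbifold base (or at least that the relevant map behaves like a general-type fibration so that Theorem~\ref{koo} and the factorization Theorem~\ref{ftcm} apply); this is exactly Ueno's theorem on subvarieties of tori, which guarantees that a proper Albanese image, after modding out by its stabilizer subtorus, fibers over a positive-dimensional general type variety. Second, and more seriously, the whole machine rests on Jouanoulou's trick (Proposition~\ref{jtrick}), which the authors flag as genuinely algebraic and unavailable in the non-algebraic K\"ahler setting; so the proof as I have sketched it really proves surjectivity of the Albanese map only insofar as the needed Stein affine bundle $\tau : M \to \bar X$ exists. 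I expect the authors finesse this by observing that the Albanese \emph{torus} $A$ is projective (abelian varieties being a red herring — complex tori need not be algebraic), and hence one can transfer the problem to the abelian/quasi-Albanese setting where Jouanoulou applies to $A$ rather than to $X$; the genuinely delicate step is therefore reducing the non-surjectivity of $a_X$ to a non-degeneracy statement about a map into an object to which the algebraic trick and Corollary~\ref{cint} can both be legitimately applied.
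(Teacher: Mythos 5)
Your proposal follows the opposite-complex-structure/integral mechanism of Theorem \ref{hps}, and it founders on exactly the obstacle you flag but do not close: that mechanism requires a Stein affine bundle $\tau:M\to\bar X$ from Proposition \ref{jtrick} (Jouanoulou's trick) and the factorization Theorem \ref{ftcm}, whose proof compactifies $M$ to a projective variety and invokes the orbifold Kobayashi--Ochiai extension theorem. Both are genuinely algebraic and unavailable for a general compact K\"ahler $X$ --- the authors say so explicitly in the introduction, which is why ``$hP\Rightarrow$ special'' remains a conjecture in the K\"ahler case. Your suggested finesse, that the Albanese torus is projective, is false (the Albanese of a compact K\"ahler manifold is a complex torus which need not be an abelian variety) and in any case beside the point: Jouanoulou's trick must be applied to $\bar X$ itself, not to the target $A$, in order to produce the Stein source on which the $h$-principle is invoked. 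As written, your argument therefore proves the statement only for projective $X$, where it is already subsumed by Theorem \ref{hps}.

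The paper's proof is entirely different and much lighter: it simply observes that the proof of Theorem \ref{ndqa} applies verbatim. That argument uses the $h$-principle only for the Stein manifold $\C^*$. If $Z:=a_X(X)\subsetneq A:=\mathrm{Alb}(X)$, the Kawamata/Ueno structure theorem gives finitely many translated proper subtori $S_i\subset Z$ containing every translated subtorus of $A$ lying in $Z$; Lemma \ref{lemx} produces $\gamma_0\in\pi_1(A)$ outside all the $\pi_1(S_i)$; surjectivity of $\pi_1(X)\to\pi_1(A)$ lifts $\gamma_0$ to $\gamma\in\pi_1(X)$, and $hP(X)$ yields a holomorphic $f:\C^*\to X$ inducing $\gamma$; the Bloch--Ochiai theorem then forces the Zariski closure of $(a_X\circ f\circ\exp)(\C)$ to be a translated subtorus of $A$ contained in $Z$, hence contained in some $S_i$, contradicting the fact that $(a_X\circ f)_*\pi_1(\C^*)$ contains $\gamma_0\notin\pi_1(S_i)$. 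No Jouanoulou, no opposite complex structure, no core map, and no projectivity is needed --- which is precisely why this particular consequence of the $h$-principle survives in the K\"ahler category while the full specialness statement does not (yet).
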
 

\begin{proof} The proof of theorem \ref{ndqa} applies
\end{proof}

\begin{corollary} Let $X$ be a compact K\"ahler surface satisfying the $h$-principle. Then $X$ is special. 
\end{corollary}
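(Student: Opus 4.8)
The plan is to combine the surjectivity of the Albanese map (Theorem~\ref{hpws}) with the classification of special compact K\"ahler surfaces recalled in Remark~\ref{rspec}.(8), according to which a compact K\"ahler surface is special precisely when $\kappa(X)\le 1$ and $\pi_1(X)$ is almost abelian, equivalently when it falls into one of the cases (a), (b), (c) listed there. Since both the $h$-principle and the surjectivity of the Albanese are inherited by every finite \'etale cover (Lemma~\ref{et} together with Theorem~\ref{hpws}), I would first record that for every such cover $X'$ one has $q(X')=\dim\mathrm{Alb}(X')\le \dim X'=2$.

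The first reduction is on the Kodaira dimension. Compact K\"ahler surfaces with $\kappa=-\infty$ (rational or ruled) and with $\kappa=2$ (general type) are automatically projective, so for these Theorem~\ref{hps} already yields specialness; in particular $\kappa(X)=2$ is excluded, since a special surface has $\kappa<2$. Thus I may assume $X$ is non-projective, which leaves only $\kappa(X)\in\{0,1\}$. If $\kappa(X)=0$ then $X$ is special by case (b) with no further argument, so the whole difficulty is concentrated in the properly elliptic case $\kappa(X)=1$.

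In that case I must verify condition (c), namely $q(X')\le 1$ for every finite \'etale cover $X'$, and by the bound above it suffices to exclude $q(X')=2$. Here the Albanese map $\mathrm{alb}_{X'}\colon X'\to T$ onto the $2$-torus $T=\mathrm{Alb}(X')$ is surjective, hence generically finite. Restricting it to the general fibre $F$ of the Iitaka (elliptic) fibration $X'\to B$, the elliptic curve $F$ maps to a translate of a subtorus of $T$; a short analysis shows that either this image is a point, forcing $\mathrm{alb}_{X'}$ to factor through the curve $B$ and contradicting surjectivity onto the two-dimensional $T$, or, using $\kappa(X')=1$, the orbifold base of $X'\to B$ is of general type. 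In the latter situation Campana's orbifold-\'etale construction produces a further finite \'etale cover $X''\to X'$ admitting a genuine holomorphic surjection onto a smooth curve $B''$ of genus $\ge 2$. Then $X''\to B''\hookrightarrow \mathrm{Jac}(B'')$ forces $q(X'')\ge g(B'')\ge 2$, so by the bound $q(X'')=2=g(B'')$, the induced map $\mathrm{Alb}(X'')\to\mathrm{Jac}(B'')$ is an isogeny, and the image of $\mathrm{alb}_{X''}$ is the one-dimensional curve $B''$ inside the two-dimensional $\mathrm{Alb}(X'')$; this contradicts the surjectivity of $\mathrm{alb}_{X''}$ guaranteed by Theorem~\ref{hpws}.

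The main obstacle is precisely this last case ($\kappa=1$, $q=2$): the obstruction to specialness is carried by an orbifold base of general type rather than by a holomorphic map to a Brody-hyperbolic manifold, so one cannot invoke Theorem~\ref{tphpwcc} directly. The crux is therefore to convert the orbifold-general-type base into an honest fibration over a curve of genus $\ge 2$ on a suitable finite \'etale cover, after which the surjectivity of the Albanese, propagated to all covers, delivers the contradiction.
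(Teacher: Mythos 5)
Your proposal is correct, and it ends exactly where the paper's proof ends: a finite \'etale cover $X''$ carrying a surjective holomorphic map onto a curve of genus $\ge 2$ is incompatible with the surjectivity of its Albanese map, which is guaranteed by Lemma~\ref{et} together with Theorem~\ref{hpws}. The difference is how you reach that cover. The paper's proof is a three-line reduction: it invokes the equivalence ``special $\iff$ weakly special'' for surfaces (Remark~\ref{rspec}(8)), notes that $X$ is not of general type (general type K\"ahler surfaces are projective, so Theorem~\ref{hps} would apply), and concludes that non-specialness directly yields an \'etale cover fibering over a general type curve. You instead re-derive this surface-level equivalence from scratch: Enriques--Kodaira reduction to $\kappa\in\{0,1\}$, the bound $q(X')\le 2$ from Albanese surjectivity on all covers, and in the properly elliptic case an analysis of the Iitaka fibration followed by the orbifold-\'etale covering trick. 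This buys self-containedness at the cost of length, and it concentrates all the real work in one compressed sentence: the claim that $\kappa(X')=1$ together with $q(X')=2$ and fibres mapping onto an elliptic curve forces the orbifold base $(B,\Delta)$ to be of general type. That step is true but not ``short'': when $g(B)=1$ it needs the identity $q=g(B)+h^0(B,L^{\vee})$ to conclude $\chi(\mathcal{O}_{X'})=0$ and $L\cong\mathcal{O}_B$, and then the canonical bundle formula to see that $\kappa=1$ forces a nonempty multiple-fibre divisor $\Delta$; you should also note that the case $g(B)\ge 2$ is handled immediately (no further cover needed), since $X'\to B$ already contradicts Albanese surjectivity. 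With that step filled in, your argument is a valid, more elementary (classification-based) substitute for the paper's citation of the specialness/weak-specialness equivalence for surfaces.
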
 

\begin{proof} Assume not. Then $X$ is in particular not weakly special. Since $X$ is not of general type, by theorem \ref{hps}, there exists a finite \'etale cover $\pi:X'\to X$ and a surjective holomorphic map $f:X'\to C$ onto a curve $C$ of general type. Because $X'$ also satisfies the $h$-principle, by Lemma \ref{et} below, this contradicts theorem \ref{hpws}
\end{proof}

\begin{lemma}\label{et}
Let $\pi:X'\to X$ be an unramified covering between complex spaces. If $X$ fulfills the $h$-principle,
so does $X'$.
\end{lemma}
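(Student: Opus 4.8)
The plan is to reduce $hP(X')$ to the assumed $hP(X)$ by combining the homotopy lifting property of covering spaces with the fact that an unramified covering of complex spaces is a local biholomorphism.

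First I would start with an arbitrary Stein manifold $S$ and an arbitrary continuous map $f':S\to X'$; the goal is to produce a holomorphic map homotopic to $f'$. Composing with the covering map gives a continuous map $\pi\circ f':S\to X$. Applying the hypothesis $hP(X)$ yields a holomorphic map $G:S\to X$ together with a homotopy $H:S\times[0,1]\to X$ satisfying $H_0=\pi\circ f'$ and $H_1=G$. Next I would invoke the homotopy lifting property of the covering $\pi:X'\to X$: since $S$ is a manifold, hence paracompact and locally contractible, the pair consisting of the initial lift $f'$ and the homotopy $H$ admits a (unique) continuous lift $\widetilde{H}:S\times[0,1]\to X'$ with $\widetilde{H}_0=f'$ and $\pi\circ\widetilde{H}=H$. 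I then set $F':=\widetilde{H}_1$. By construction $F'$ is homotopic to $f'$ through $\widetilde{H}$, and $\pi\circ F'=H_1=G$.

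It remains to verify that $F'$ is holomorphic. Here I would use that an unramified covering is a local biholomorphism: every point of $X'$ has an open neighborhood $U'$ that $\pi$ maps biholomorphically onto an open set $U\rc X$. Since $F'$ is continuous and $\pi\circ F'=G$ is holomorphic, on $F'^{-1}(U')$ we have $F'=(\pi|_{U'})^{-1}\circ G$, a composition of holomorphic maps between complex spaces, hence holomorphic. As these open sets cover $S$, the map $F'$ is holomorphic on all of $S$. This exhibits, for the given $f'$, a homotopic holomorphic map $F'$, which is exactly $hP(X')$.

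The step requiring the most care is the interplay of the two liftings rather than any genuine depth: one must make sure the homotopy lifting property is being applied correctly (it holds for covering spaces over a locally nice base such as a manifold, so that the continuous lift $\widetilde{H}$ exists starting from the prescribed value $f'$ at time $0$), and that continuity of the lift together with holomorphy of $\pi\circ F'$ forces holomorphy of $F'$. Both points are routine once one observes that $\pi$ is \'etale, i.e.\ a local biholomorphism, so I expect no substantial obstacle.
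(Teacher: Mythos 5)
Your proposal is correct and follows essentially the same route as the paper: push $f'$ down to $X$, apply $hP(X)$ to get a holomorphic map homotopic to $\pi\circ f'$, and use the homotopy lifting property of the covering to lift back to $X'$. The only difference is that you spell out explicitly why the lift is holomorphic (via $\pi$ being a local biholomorphism), a point the paper leaves implicit.
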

\begin{proof}
Let $f:S\to X'$ be a continuous map from a Stein space $S$.
By assumption, there is a holomorphic map $g:S\to X$ homotopic to 
$\pi\circ f$. The homotopy lifting property for coverings implies
that $g$ can be lifted to a holomorphic map $G:S\to X'$ which is
homotopic to $f$.
\end{proof}

\section{Necessary conditions on the Quasi-Albanese map}\label{QAm}

We give two necessary conditions, bearing on its quasi-Albanese map, in order that a quasi-projective manifold $X$ satisfies the $h$-principle. These conditions are necessary for $X$ to be special.

\begin{theorem}\label{ndqa}
Let $X$ be a complex quasi projective manifold for which the Quasi-Albanese map
is not dominant.

Then $X$ does not satisfy the $h$-principle.
\end{theorem}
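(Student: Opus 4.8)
The plan is to prove the contrapositive: if $X$ satisfies the $h$-principle, then its quasi-Albanese map $\alpha\colon X\to A$ (with $A$ a semi-torus) is dominant. Let $Y$ denote the Zariski closure of $\alpha(X)$ in $A$, and fix a smooth projective compactification $\widehat X$ of $X$ with normal-crossing boundary $D$. By construction $\alpha^{*}$ is an isomorphism from the invariant $1$-forms on $A$ onto $H^{0}(\widehat X,\Omega^{1}_{\widehat X}(\log D))$, which forces $Y$ not to be contained in any proper translate of a sub-semi-torus. Suppose, for contradiction, that $\alpha$ is not dominant, so $Y\subsetneq A$; then $Y$ is in particular not a translate of a sub-semi-torus. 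By Kawamata's structure theorem for subvarieties of semi-abelian varieties there is a sub-semi-torus $B\subseteq A$, with quotient $q\colon A\to A':=A/B$, such that $Y':=q(Y)$ is positive-dimensional and contains no translate of a positive-dimensional sub-semi-torus of $A'$ (the Ueno--Kawamata fibration). By the Bloch--Ochiai--Kawamata theorem for semi-abelian varieties, as established in \cite{NWY}, every entire curve in $A'$ has Zariski closure a translate of a sub-semi-torus; hence every entire curve in $Y'$ is constant, so a smooth model of $Y'$ is Brody hyperbolic and of log general type. The composite $g:=q\circ\alpha\colon X\to Y'$ is then a non-constant dominant (hence non-degenerate) holomorphic map onto this log-general-type target, and it will play the role of the core map in the proof of Theorem~\ref{hps}.

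I then run the opposite-complex-structure and Jouanoulou machinery of Theorem~\ref{hps}, with $g$ in place of $c_X$. Passing to the conjugate manifold $\overline X$, which is again quasi-projective, the quasi-projective form of the Jouanoulou construction (Proposition~\ref{jtrick}) yields a Stein affine $\C^{k}$-bundle $\tau\colon M\to\overline X$ with a real-analytic section $\sigma$. The $h$-principle furnishes a holomorphic map $H\colon M\to X$ homotopic to $id_X\circ\tau$, whence $\zeta:=H\circ\sigma\colon\overline X\to X$ is homotopic to $id_X$. The essential point is that $g\circ H\colon M\to Y'$ restricts on each fibre $\C^{k}$ of $\tau$ to a holomorphic map from an affine space into the log-general-type variety $Y'$, and is therefore constant along the fibres by the degeneracy statements of \cite{NWY}. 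Hence $g\circ H=\psi\circ\tau$ for a holomorphic $\psi\colon\overline X\to Y'$, and since $\tau\circ\sigma=id_{\overline X}$ we get $g\circ\zeta=\psi$; combined with the logarithmic analogues of the orbifold Kobayashi--Ochiai extension theorem (Theorem~\ref{koo}) and of the factorisation theorem (Theorem~\ref{ftcm})---available because the $\C^{k}$-fibres are rationally connected and the boundary components dominate the base---this shows that $\varphi:=g\circ\zeta$ extends to a meromorphic map from a compactification of $\overline X$ to $\widehat{Y'}$.

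To conclude, I apply the logarithmic form of Corollary~\ref{cint} to the non-degenerate dominant map $g\colon X\to Y'$. As in that corollary, the orientation-reversing identity $id_X\colon\overline X\to X$ contributes a sign $(-1)^{d}=-1$ (with $d=1$) in the integral comparison of Lemma~\ref{int}, taken against a Kähler form pulled back from $Y'$, so that $g\circ\zeta$ cannot be meromorphic. This contradicts the meromorphy of $\varphi=g\circ\zeta$ established in the previous step, and the contradiction forces $Y=A$, that is, $\alpha$ is dominant, as claimed.

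The main obstacle is to transport the cohomological/orientation argument of Lemma~\ref{int} and Corollary~\ref{cint}, which is designed for compact Kähler manifolds, to the non-compact quasi-projective setting. Concretely one must extend $g$ and $\varphi$ meromorphically across the normal-crossing boundary $D$ and verify that the integrals of Lemma~\ref{int} converge with a definite sign; this is precisely where \cite{NWY} is indispensable, since it simultaneously provides the Brody hyperbolicity of $Y'$ (so that the affine fibres of $\tau$ map degenerately and $\varphi$ exists) and the value-distribution input controlling the logarithmic boundary. In the compact Kähler situation of Theorem~\ref{hpws} the boundary $D$ is absent and $A'$ is an abelian variety, so this analysis disappears and the argument reduces to the purely compact statement---which explains why the same proof applies there.
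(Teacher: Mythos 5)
Your strategy is genuinely different from the paper's, and it contains a gap that you yourself flag but do not close. The engine of your argument --- Lemma~\ref{int}, Corollary~\ref{cint} and the factorisation machinery of Theorems~\ref{koo} and \ref{ftcm} --- is built on integrating top-degree forms over a \emph{compact} K\"ahler manifold and pairing cohomology classes via Poincar\'e duality; on a non-compact quasi-projective $X$ the homotopy invariance of such integrals (Stokes) and even their convergence fail without boundary control that neither the paper nor \cite{NWY} supplies. Asserting that ``logarithmic analogues'' of these statements are ``available'' and that \cite{NWY} is ``indispensable'' for the convergence is not a proof: \cite{NWY} is a degeneracy theorem for entire curves in semi-abelian varieties and says nothing about the convergence or the sign of integrals such as $\int_X\omega_X^{n-d}\wedge g^*\omega_{Y'}^{d}$ near the boundary divisor $D$. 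As written, the final contradiction therefore does not exist. (Your first paragraph --- the Ueno--Kawamata reduction producing a dominant map $g:X\to Y'$ onto a positive-dimensional Brody-hyperbolic quotient --- is sound, but everything after it rests on unproved compact-type statements.)

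The paper's own proof avoids all of this with a purely homotopy-theoretic argument. Since $Z:=\overline{a(X)}\subsetneq A$, Kawamata's theorem yields finitely many translated proper subtori $S_i\subset Z$ containing every translated subtorus of $A$ lying in $Z$; each $\pi_1(S_i)$ has rank strictly less than that of $\pi_1(A)$, so the counting Lemma~\ref{lemx} produces $\gamma_0\in\pi_1(A)\setminus\bigcup_i\pi_1(S_i)$, which lifts to $\gamma\in\pi_1(X)$ because $\pi_1(X)\to\pi_1(A)$ is surjective. Applying the $h$-principle to the Stein manifold $\C^*$ gives a holomorphic $f:\C^*\to X$ realizing $\gamma$, and Noguchi's logarithmic Bloch--Ochiai theorem forces the Zariski closure of $(a\circ f\circ\exp)(\C)$ to be a translated sub-semitorus of $A$ inside $Z$, hence inside some $S_i$; this contradicts $(a\circ f)_*\pi_1(\C^*)\not\subset\pi_1(S_i)$. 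No compactness, orientation, or integral argument is needed, which is exactly why the theorem holds in the quasi-projective setting. To repair your proof you would have to replace your second and third paragraphs by an argument of this kind.
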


\begin{proof}
Let $A$ be the Quasi Albanese variety of $X$ and let $Z$ denote the closure of the image of
$X$ under the Quasi Albanese map $a:X\to A$. We may assume $e_A\in Z$.
By the theorem of Kawamata (\cite{K}), there are finitely many subtori $T_i\subset A$
and $T_i$-orbits $S_i\subset A$ such that $S_i\subset Z$ and such that
every translated subtorus of $A$ which is contained in $Z$ must already
be contained in one of the $S_i$. 
Due to lemma~\ref{lemx} below, there is an element
$\gamma_0\in\pi_1(A)$ which is not contained in any of the $\pi_1(S_i)$.
By the functoriality properties of the Albanese map the group
homomorphism $\pi_1(X)\to\pi_1(A)$ is surjective. Thus we can lift
$\gamma_0$ to an element $\gamma\in\pi_1(X)$.
Let us now assume that the $h$-principle holds.
In this case there must exist a holomorphic map $f$ from $\C^*$ to $X$
inducing $\gamma$. By composition we obtain a holomorphic map
\[
F=a\circ f \circ\exp:\C\to Z\subset A
\]
Now Noguchis logarithmic version of the theorem of Bloch-Ochiai implies that the analytic
Zariski closure of $F(\C)$ in $Z$ is a translated sub semitorus of $A$.
Therefore $F(\C)$ must be contained in one of the $S_i$.
But this implies
\[
(a\circ f)_*\left(\pi_1(\C^*)\right)\subset\pi_1(S_i)
\]
which contradicts our choice of $\gamma$.
\end{proof}

\begin{lemma}\label{lemx}
Let $\Gamma_1,\ldots,\Gamma_k$ be a family of subgroups of $G=\Z^n$
with $rank_{\Z}\Gamma_i<n$.

Then $\cup_i\Gamma_i\ne G$.
\end{lemma}

\begin{proof}
For a subgroup $H\subset G\subset\R^n$ let $N(H,r)$ denote the number of elements
$x\in H$ with $||x||\le r$. Then $N(H,r)=O(r^d)$ if $d$ is the rank of the $\Z$-module
$H$. Now $N(\Gamma_i,r)=O(r^{n-1})$, but $N(G,r)=O(r^n)$. This implies the
statement.
\end{proof}

We find again:

\begin{corollary}
Let $X$ be an algebraic variety which admits a surjective morphism
onto an algebraic curve $C$. If $C$ is hyperbolic, then $X$ does not
fulfill the $h$-principle.
\end{corollary}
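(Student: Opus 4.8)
The plan is to argue by contradiction, assuming that $X$ satisfies the $h$-principle, and to run the same scheme as in the proof of Theorem~\ref{ndqa}: exhibit an element of $\pi_1(X)$ whose image in $\pi_1(C)$ is nontrivial, realise it by a holomorphic map from the Stein manifold $\C^*$, and derive a contradiction from the hyperbolicity of $C$. The essential simplification compared with Theorem~\ref{ndqa} is that here the target curve $C$ is itself hyperbolic, so that no appeal to Noguchi's logarithmic Bloch--Ochiai theorem is needed: any holomorphic map $\C^*\to C$ is automatically constant, since it lifts along the universal covers $\exp:\C\to\C^*$ and $\mathbb{D}\to C$ to a bounded entire map $\C\to\mathbb{D}$, which is constant by Liouville.

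First I would check that the homomorphism $p_*:\pi_1(X)\to\pi_1(C)$ is nontrivial. After replacing $C$ by the normalisation of the Stein factorisation of $p$ (a smooth curve finite over $C$, hence again hyperbolic) we may assume that the general fibre of $p$ is connected. By Verdier's generic topological triviality there is a Zariski-open $C_0\subset C$ over which $p$ is a locally trivial topological fibration; writing $X_0=p^{-1}(C_0)$, the homotopy exact sequence of this fibration shows that $\pi_1(X_0)\to\pi_1(C_0)$ is surjective. Since $C\setminus C_0$ is finite and $X\setminus X_0$ has complex codimension $\ge 1$, both restriction maps $\pi_1(X_0)\to\pi_1(X)$ and $\pi_1(C_0)\to\pi_1(C)$ are surjective; as the relevant square commutes, a short diagram chase then yields that $\pi_1(X)\to\pi_1(C)$ is surjective, in particular nontrivial because $\pi_1(C)\ne 1$ for a hyperbolic curve. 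Fix $\gamma\in\pi_1(X)$ with $\delta:=p_*(\gamma)\ne 1$.

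Next, regard $\gamma$ as a free homotopy class of loops and realise it by a continuous map $u:\C^*\to X$ inducing $\gamma$ on $\pi_1$, using the homotopy equivalence $S^1\hookrightarrow\C^*$. As $\C^*$ is a Stein manifold, the $h$-principle provides a holomorphic map $f:\C^*\to X$ homotopic to $u$; thus $f$ sends a generator of $\pi_1(\C^*)=\Z$ to a conjugate of $\gamma$. Now $p\circ f:\C^*\to C$ is holomorphic, hence constant by the hyperbolicity of $C$ noted above, so $(p\circ f)_*=0$ on fundamental groups. This forces a conjugate of $\delta=p_*(\gamma)$ to be trivial, hence $\delta=1$, contradicting our choice of $\gamma$. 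Therefore $X$ cannot satisfy the $h$-principle.

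The only genuinely delicate point is the nontriviality of $p_*$ on fundamental groups in the first step; everything else is formal once $\C^*$ is used as the Stein test object. In the non-compact case one must be slightly careful, which is why I pass to the Stein factorisation to secure connected fibres and invoke generic topological triviality rather than trying to produce a complete curve in $X$ surjecting onto $C$, which need not exist when $X$ and $C$ are affine. Alternatively, one could bypass the fibration argument and merely produce a single nonconstant morphism $B\to C$ from a curve $B\subset X$, using that such a morphism of curves has nontrivial image on $\pi_1$; either route feeds the same loop into the $\C^*$-argument.
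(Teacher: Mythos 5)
Your argument is correct (granting the smoothness of $X$ that the statement implicitly carries -- the paper's own proof needs it just as much, since it invokes the quasi-Albanese of $X$ and Theorem~\ref{ndqa}, which is stated for quasi-projective manifolds), but it takes a genuinely different route. The paper deduces the corollary in two lines from Theorem~\ref{ndqa}: by functoriality of the quasi-Albanese, $p:X\to C$ induces a surjection from the quasi-Albanese $A$ of $X$ onto the quasi-Albanese $J$ of $C$; hyperbolicity of $C$ forces $\dim J>1=\dim C$, so the composite $X\to A\to J$, whose image lies in the one-dimensional image of $C$, cannot be dominant, hence $X\to A$ is not dominant and Theorem~\ref{ndqa} applies. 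You instead re-run the underlying mechanism of that theorem directly against $C$: you establish surjectivity of $p_*:\pi_1(X)\to\pi_1(C)$ (Stein factorisation plus generic topological triviality plus removal of subsets of real codimension two), realise a class with nontrivial image by a holomorphic map $f:\C^*\to X$ via the $h$-principle, and observe that $p\circ f$ is constant because $p\circ f\circ\exp:\C\to C$ is. This buys independence from the heavy inputs behind Theorem~\ref{ndqa} (Kawamata's theorem on translated subtori and Noguchi's logarithmic Bloch--Ochiai), at the cost of the topological work in your first step, which is the only delicate point and which you handle correctly for smooth (or normal) $X$; the paper's route is shorter once Theorem~\ref{ndqa} is in hand and avoids any discussion of $\pi_1(X)\to\pi_1(C)$ beyond what that theorem already encapsulates. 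Both arguments are valid.
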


\begin{proof}
Let $A$ resp.~$J$ denote the quasi-Albanese variety of $X$ resp.~$C$.
By functoriality of the quasi Albanese we have a commutative diagram

Since $\dim(J)>\dim(C)$ due to hyperbolicity of $C$, the quasi-Albanese
map $X\to A$ can not be dominant.
\end{proof}

By similar reasoning, using \cite{NWY}:

\begin{proposition}\label{niqa}
Let $X$ be a quasi projective manifold which admits a finite map onto an semi abelian
variety.

Then $X$ fulfills the $h$-principle only if $X$ is a semi-abelian variety.
\end{proposition}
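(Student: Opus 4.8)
The plan is to imitate the argument of Theorem~\ref{ndqa}, exploiting the finiteness of $\pi:X\to A$ and the degeneracy results of \cite{NWY} to confine the entire curves that the $h$-principle is forced to produce.

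First I would dispose of the unramified case: if $\pi$ is étale, then a connected finite étale cover of a semi-abelian variety inherits a group structure making $\pi$ an isogeny, so $X$ is itself semi-abelian and there is nothing to prove. Hence I may assume $\pi$ is ramified, so that the branch locus $B\subset A$ is a non-empty effective divisor. Next I would normalise the target using the quasi-Albanese. Assuming $hP(X)$, Theorem~\ref{ndqa} shows that the quasi-Albanese map $a:X\to\mathrm{Alb}(X)$ is dominant, while its universal property factors $\pi=\psi\circ a$ with $\psi:\mathrm{Alb}(X)\to A$ a morphism of semi-abelian varieties. Comparing dimensions (dominance of $a$ gives $\dim\mathrm{Alb}(X)\le\dim X=\dim A$, while $\psi\circ a=\pi$ finite gives the reverse inequality) forces $\psi$ to be an isogeny and $a$ to be quasi-finite, dominant, and --- since $\psi$ is étale --- still ramified. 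Replacing $A$ by $\mathrm{Alb}(X)$, which is again semi-abelian, I may therefore assume $\pi=a$ is the quasi-Albanese map; in particular $\pi_1(X)\to\pi_1(A)$ is surjective, exactly as in Theorem~\ref{ndqa}.

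Now I would run the $\pi_1$-argument. The point supplied by \cite{NWY} is that the ramification of $\pi$ obstructs Zariski-dense entire curves in $X$: via the second main theorem for semi-abelian varieties with truncated counting functions, the curve $\pi\circ f:\C\to A$ underlying any entire $f:\C\to X$ meets the branch divisor $B$ with extra multiplicity along the ramified sheets, and the level-one estimate of \cite{NWY} then excludes Zariski-density. Combining this with the logarithmic Bloch--Ochiai/Noguchi step and Kawamata's theorem (\cite{K}), as in Theorem~\ref{ndqa}, the images of all non-constant entire curves of $X$ lie over a finite family of proper translated sub-semi-tori $S_i\subsetneq A$. Using Lemma~\ref{lemx} I would choose $\gamma_0\in\pi_1(A)\cong\Z^m$ avoiding every $\pi_1(S_i)$, lift it to $\gamma\in\pi_1(X)$, and, since $\C^*$ is Stein and homotopy equivalent to $S^1$, invoke $hP(X)$ to realise $\gamma$ by a holomorphic map $f:\C^*\to X$. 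Then $\pi\circ f\circ\exp:\C\to A$ is a non-constant entire curve whose closure would have to be one of the $S_i$, forcing $\gamma_0\in\pi_1(S_i)$ --- the desired contradiction. Therefore $\pi$ cannot be ramified, and $X$ is semi-abelian.

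The hard part will be the middle, $\cite{NWY}$-based step: converting ``$\pi$ ramified'' into a usable confinement statement of the form ``every entire curve of $X$ lies over one of finitely many proper translated sub-semi-tori.'' This requires matching the positivity and good-position hypotheses of the truncated second main theorem of \cite{NWY} with the purely topological input needed for Lemma~\ref{lemx}, and correctly accounting for the fact that $\pi\circ f$ may meet $B$ also along unramified sheets, so that the extra-multiplicity gain must be localised on the genuinely ramified part; in degenerate situations (branch divisor running along a subtorus) an induction on $\dim A$ through the corresponding quotient semi-abelian variety is likely to be needed.
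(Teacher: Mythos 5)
The paper itself gives no argument for Proposition \ref{niqa} beyond the phrase ``by similar reasoning, using \cite{NWY}'', so the only meaningful comparison is with the evident adaptation of the proof of Theorem \ref{ndqa}. Your skeleton is that adaptation, and several steps are correct and worth keeping: disposing of the \'etale case (a connected finite \'etale cover of a semi-abelian variety is again semi-abelian), the reduction to the case where $\pi$ is the quasi-Albanese map (which restores the surjectivity of $\pi_1(X)\to\pi_1(A)$ used in Theorem \ref{ndqa}), the realisation of a chosen class $\gamma\in\pi_1(X)$ by a holomorphic map $\C^*\to X$ via the $h$-principle, and the final appeal to Lemma \ref{lemx}.

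The genuine gap is exactly the step you flag as ``the hard part'', and it is more serious than your sketch suggests: the finiteness of the family $\{S_i\}$ does \emph{not} come from ``Kawamata's theorem, as in Theorem \ref{ndqa}''. In Theorem \ref{ndqa}, Kawamata's theorem is applied to the \emph{proper} Zariski-closed subset $Z=\overline{a(X)}\subsetneq A$, and the $S_i$ are the finitely many maximal translated subtori contained in $Z$. Here $\pi$ is finite and surjective, so $Z=A$ and Kawamata's theorem yields nothing. What \cite{NWY} provides for each individual entire curve $f:\C\to X$ is that the Zariski closure of $\pi\circ f(\C)$ is a translate $b_f+B_f$ of a sub-semi-torus, proper unless $f$ is Zariski-dense (and if some $f$ is Zariski-dense, the structure part of \cite{NWY} forces $\pi$ to be unramified, so $X$ is semi-abelian and that case is finished). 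But a priori the collection $\{B_f\}$ ranges over the countably many sub-semi-tori of $A$, and countably many proper saturated sublattices of $\pi_1(A)\cong\Z^m$ can cover all of $\Z^m$ (already $\Z^2$ is the union of its rank-one saturated sublattices), so Lemma \ref{lemx} cannot be invoked. What is needed, and what your sketch does not supply, is a uniform statement: either that the sub-semi-tori $B\subsetneq A$ admitting a translate $b+B$ over which $\pi^{-1}(b+B)$ has a component finite and \'etale over $b+B$ (by \cite{NWY}, the only translates that can carry the Zariski closure of an entire curve of $X$) form a finite set, or that all degenerate entire curves of $X$ lie over a fixed proper algebraic subset of $A$, to which Kawamata's theorem can then legitimately be applied. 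The truncated second main theorem and the ``level-one estimate'' you invoke establish degeneracy of each curve separately; they do not by themselves produce this finiteness, which is the actual content that must be extracted from (or added to) \cite{NWY} to close the argument.
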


\section{(Counter-)examples}\label{CE}

We now present some examples showing that the desired
implications ``special $\implies$ $h$-principle''
and ``$\C$-connected $\implies$ special'' certainly
do not hold without imposing some normality and 
algebraicity/K\"ahler
condition on the manifold in question.

\begin{example}
There is a non-normal projective curve $X$ which is rational and
$\C$-connected, but does not fulfill the $h$-principle.

We start with $\hat X=\P_1$ and define $X$ by identifying $0$ and 
$\infty$ in $\hat X=\C\cup\{\infty\}$. Via the map
$[x_0:x_1]\mapsto[ x_0^3+x_1^3:x_0^2x_1:x_0x_1^2]$ the quotient
space $X$ can be realized as
\[
X\simeq\{[z_0:z_1:z_2]: z_0z_1z_2=z_1^3 + z_2^3\}.
\]
Let $\tilde X$ denote the universal covering of $X$. Then $\tilde X$
consists of countably infinitely many $2$-spheres glued together.
By Hurewitz $\pi_2(\tilde X)\simeq H_2(\tilde X,\Z)\simeq\Z^\infty$.
The long homotopy sequence associated to the covering map
implies $\pi_2(X)\simeq\Z^\infty$.
As a consequence the group homomorphism
\[
\Z\simeq \pi_2(\hat X)\ \longrightarrow\ \pi_2(X)\simeq\Z^\infty
\]
induced
by the natural projection $\pi:\hat X\to X$ is not surjective.
Now let $Q$ denote the two-dimensional affine quadric. Note that
$Q$ is a Stein manifold which is homotopic to the $2$-sphere.
Because $\pi_2(\hat X) \to \pi_2(X)$  is not surjective, there
exists a continuous map $f:Q\to X$ which can not be lifted to a
continuous map from $Q$ to $\hat X$. On the other hand, every holomorphic
map from the complex manifold $Q$ to $X$ can be lifted to $\hat X$,
because $\hat X$ is the normalization of $X$.
Therefore there exists a continuous map from $Q$ to $X$ which is not
homotopic to any holomorphic map. Thus $X$ does not fulfill
the $h$-principle.
\end{example}

\begin{example}
There are non-K\"ahler compact surfaces, namely Inoue surfaces,
which do not fulfill the $h$-principle, although they are special.
 
These Inoue surfaces are compact
complex surface of algebraic dimension zero with $\Delta\times\C$
as universal covering and which are foliated by complex lines.
They are special (meaning that they satisfy definition 2.1, but the term `special' is reserved strictly speaking to the compact K\" ahler case), 
because due to algebraic dimension zero there
are no Bogomolov sheaves. On the other hand, every holomorphic map
from $\C^*$ to such a surface has its image contained in one of those
leaves. This implies that there are many group homomorphisms from
$\Z$ to the fundamental group of the surface which are not
induced by holomorphic maps from $\C^*$. For this reason Inoue surfaces
do not fulfill the $h$-principle.
\end{example}

\begin{example}
There is a non-compact complex manifold which is $\C$-connected,
but does not satisfy the $h$-principle.

Due to Rosay and Rudin (\cite{RR})
there exists a discrete subset $S\subset\C^2$
such that $F(\C^2)\cap S\ne\{\}$ for any non-degenerate holomorphic map
$F:\C^2\to\C^2$. (Here $F$ is called non-degenerate iff there is a point
$p$ with $rank(DF)_p=2$.)
Let $X=\C^2\setminus S$.
Due to the discreteness of $S$ it is easy to show that
$X$ is $\C$-connected. Now let $G=SL_2(\C)$. Then $G$ is a Stein
manifold which is homotopic to $S^3$.
Let $p\in SL_2(\C)$ and $v,w\in T_pG$. Using the
exponential map there is a holomorphic map from $\C^2$ to $G$ for which
$v$ and $w$ are in the image. From this it follows easily
that for every holomorphic
map $F:G\to X$ and every $p\in G$ we have $rank(DF)_p\le 1$.
Hence $F^*\omega\equiv 0$ for every $3$-form $\omega$ on $X$ and
every holomorphic map $F:G\to X$.
This implies that for every holomorphic map $F:G\to X$ the induced
map $F^*:H^3(X,\R)\to H^3(G,\R)$ is trivial. 
On the other hand there are continuous maps $f:S^3\to X$
for which $f^*:H^3(X,\C)\to H^3(S^3,\C)$ is non-zero:
Choose $p\in S$. Since $S$ is countable, there is a number $r>0$
such that $||p-q||\ne r\ \forall q\in S$.
Then $f:v\mapsto p+rv$ defines a continuous from $S^3=\{v\in\C^2:||v||=1\}$
to $X$ which induces a non-zero homomorphism $f^*:H^3(X,\C)\to H^3(S^3,\C)$.

As a consequence, $X$ does not fulfill the $h$-principle.
\end{example}

\section{``special'' $\implies$ $h$-principle ?}\label{EO} 

We consider the question: if $X$ is projective, smooth and special, does it satisfy the $h$-principle? The question is very much open, even in dimension $2$.

For projective curves, we have the equivalence: $h$-principle satisfied if and only if special.

The projective surfaces known to satisfy the $h$-principle are the following ones: the rational surfaces, the minimal surfaces ruled over an elliptic curve, the blown-up Abelian surfaces and their \'etale undercovers, termed `bielliptic'.

This means that the special projective surfaces not known to satisfy the $h$-principle are, on the one hand, the blown-up K3 and Enriques surfaces, and on the other hand the blown-ups of surfaces with $\kappa=1$, which are elliptic fibrations over, either:

\begin{enumerate}
\item
 an elliptic base, and without multiple fibre, or:
\item 
 a rational base, and with at most $4$ multiple fibres, the sum of the inverses of the multiplicities being at least $2$ (resp. $1$) if there are $4$ (resp. $3$) multiple fibres.
\end{enumerate}
 
 In higher dimension (even $3$), essentially nothing is known. In particular, the cases of Fano, rationally connected, and even rational manifolds (for example: $\Bbb P^3$ blown-up along a smooth curve of degree $3$ or more) is open.
 
 For $n$-dimensional Fano or rationally connected manifolds, $n\geq 3$, even the existence of a non-degenerate meromorphic map from $\Bbb C^n$  to $X$ is open. This inexistence would contradict the Oka property (see definition below). In case such a map exists, nothing is known about the unirationality of $X$ (see \cite{U}, and \cite{C01}, for example).

Let us first remark that the $h$-principle satisfaction is not known to be preserved by many standard geometric operations preserving specialness. In particular, this concerns:

\begin{enumerate}
\item
Smooth blow-ups and blow-down.
\item
For (finite) \'etale coverings only one direction is known (cf.~X).
\end{enumerate}

Except for trivial cases it is very hard to verify the $h$-principle
directly.
The most important method for verifying the $h$-principle is
Gromov's theorem that
the $h$-principle is satisfied by `elliptic manifolds'.
In the terminology of M. Gromov ``ellipticity'' 
 means the existence of a holomorphic vector bundle $p:E\to X$ with zero section $z:X\to E$, and a holomorphic map $s: E\to X$ such that $s\circ z:X\to X$ is the identity map, and the derivative $ds:E\to TX$ is surjective along $z(E)$, where $E\subset TE$ is the kernel of the derivative $dp: TE\to TX$ along $z(X)\subset E$.

Homogeneous complex manifolds (e.g.~$\P_n$, Grassmannians, tori) are
examples of elliptic manifolds. Complements $\C^n\setminus A$ of algebraic
subvarieties $A$ of codimension at least two are also known to be
elliptic.

For a complex manifold $X$ being elliptic also implies  
that $X$ is `Oka', i.e.: every holomorphic map $h:K\to X$ on a compact convex subset $K$ of $\Bbb C^n$ can be uniformly approximated to any precision by holomorphic maps $H:\Bbb C^ n\to X$. Forstneric's theorems 
(\cite{F}) show that Oka manifolds satisfy stronger approximation properties. 
All known examples of Oka manifolds are subelliptic, a slight weakening of ellipticity. We refer to \cite{G}, \cite{F}, and \cite{FL} for more details and generalisations of these statements.  See also \cite{L} for an interpretation of the Oka property in terms of `Model structures'.

We have thus the following sequence of implications (the first two being always valid, the last for projective manifolds):

\[
\text{elliptic} \Rightarrow \text{Oka} \Rightarrow \text{$h$-principle} \Rightarrow 
\text{special}
\]

Although the notions `Oka' and `$h$-principle satisfied' differ in general
(for example the unit disc is evidently not Oka, but satisfies
the $h$-principle, because it is contractible), one may ask:

\begin{question} Is any projective manifold satisfying the $h$-principle Oka?
\end{question}

 \end{document}